\numberwithin{equation}{section}
\def\fnum@figure{Fig.\thefigure}
\theoremstyle{definition}
\newtheorem*{assumption*}{Assumption}
\newtheorem{remark}{Remark}
\numberwithin{remark}{section}
\newtheorem*{remark*}{Remark}
\newtheorem{lemma}{Lemma}
\numberwithin{lemma}{section}
\newtheorem{theorem}{Theorem}
\numberwithin{theorem}{section}
\let \mr=\mathrm
\let \b=\boldsymbol
\begin{document}

\begin{frontmatter}




\title{Supercloseness of the SDFEM on Shishkin triangular meshes for problems with exponential layers}

\author[label1] {Jin Zhang\corref{cor1}}
\author[label2] {Xiaowei Liu\fnref {cor2}}
\cortext[cor1] {Corresponding author:   jinzhangalex@hotmail.com }
\fntext[cor2] {Email: xwliuvivi@hotmail.com }
\address[label1]{School of Mathematical Sciences, Shandong Normal University,
Jinan 250014, China}
\address[label2]{College of Science, Qilu University of Technology, Jinan 250353, China.}

\begin{abstract}
In this paper,  we analyze the supercloseness property of the  streamline diffusion finite element method (SDFEM) on Shishkin triangular meshes, which is different from one in the case of 
rectangular meshes.
 The analysis depends on integral inequalities for the part related to the diffusion in the bilinear form.
Moreover, our result allows the construction of a simple postprocessing that yields a more accurate solution.
Finally, numerical experiments support these theoretical results.

\end{abstract}

\end{frontmatter}
%
%
%

%
%
\section{Introduction}
We consider the singularly perturbed boundary value problem
 \begin{equation}\label{eq:model problem}
 \begin{array}{rcl}
-\varepsilon\Delta u+\boldsymbol{b}\cdot \nabla u+cu=f & \mbox{in}& \Omega=(0,1)^{2},\\
 u=0 & \mbox{on}& \partial\Omega ,
 \end{array}
 \end{equation}
where $\varepsilon\ll |\b{b}|$ is a small positive parameter, the functions $\boldsymbol{b}(x,y)=(b_{1}(x,y),b_{2}(x,y))^{T}$, $c(x,y)\ge 0$ and $f(x,y)$ are supposed sufficiently smooth. We also assume  $$b_{1}(x,y)\ge \beta_{1}>0,\,b_{2}(x,y)\ge \beta_{2}>0, \, c(x,y)-\frac{1}{2}\nabla\cdot \b{b}(x,y)\ge \mu_{0}>0\quad \text{on $\bar{\Omega},$} $$
where $\beta_{1}$, $\beta_{2}$ and $\mu_{0}$ are some constants. The solution of \eqref{eq:model problem} typically has
two exponential layers of width $O(\varepsilon\ln(1/\varepsilon))$ at the sides
$x=1$ and $y=1$ of $\Omega$.  

Because of the presence of layers, standard numerical methods such as the finite element method and the finite difference method, suffer  from severe nonphysciall oscillations.
Thus, stabilized methods and/or a priori adapted meshes (see \cite{Stynes:2005-Steady, Roo1Sty2Tob3:2008-Robust}) are widely used.
In this  paper,  we are to analyze the streamline diffusion finite element method (SDFEM) \cite{Hugh1Broo2:1979-multidimensional} on the Shishkin mesh  \cite{Shishkin:1990-Grid}. This combination possesses good numerical stability and high accuracy for problems \eqref{eq:model problem}, see \cite{Linb1Styn2:2001-Numerical} for detailed numerical tests.

The SDFEM on Shishkin rectangular meshes are widely studied, see \cite{Styn1Tobi2:2003-SDFEM,Fra1Lin2Roo3:2008-Superconvergence,Fra1Kel2Sty3:2012-Galerkin,Styn1Tobi2:2008-Using,Zha1Mei2Che3:2013-Pointwise} and references therein.  
In these papers, supercloseness results  are  analyzed for  optimal $L^2$ estimates,   $L^{\infty}$ bounds and  postprocessing procedures etc. Here ``supercloseness''  means convergence order  of $u^I-u^{N}$ in some norm is greater than one
of $u-u^I$, where $u^I$ is the interpolant of the solution $u$ from the finite element space, $u^{N}$ is the SDFEM solution.  However, to our knowledge,  there are no supercloseness results of the  SDFEM on triangular meshes, which are one kind of popular grids for two-dimensional domains. 
The main reason
is that there are no analysis tools on triangular meshes similar to  Lin's integral identities \cite{Lin1Yan2Zho3:1991-rectangle} which are used to obtain the supercloseness  properties in the case of  rectangles.


In this work,  we present integral inequalities , i.e. Lemma \ref{lem:superconvergence-diffusion-term}, for the diffusion part  in the bilinear form, by means of which the bound $\Vert u^I-u^N\Vert_{SD}\le C  (\varepsilon^{1/2} N^{-1}+N^{-3/2})\ln^{3/2}N$  is obtained. 
Based on this result,  a  simple postprocessing technique  is applied to the SDFEM's solution $u^N$ and this procedure yields  a more accurate numerical solution. Finally, numerical experiments support our theoretical results.

 Here is the outline of this article. In \S 2 we give some a priori informations of the solution of \eqref{eq:model problem}, then introduce Shishkin meshes  and a streamline diffusion finite element method on these meshes. In \S 3 we obtain the supercloseness result. In \S 4 we present the uniform estimate for the postprocessing solution. Finally, some numerical results are presented in \S 5.

Throughout the article, the standard notations for the Sobolev spaces
and norms will be used; and generic constants $C$, $C_i$ are independent of
$\varepsilon$ and $N$.
An index will be attached to indicate an inner product or a norm on a subdomain $D$,
for example, $(\cdot, \cdot)_{D}$ and $\Vert \cdot \Vert_{D}$.

%
%
\section{The SDFEM on Shishkin meshes}
In this section we will introduce the apriori informations of the solution,  the Shishkin mesh and the SDFEM.
\subsection{The regularity results}
As mentioned before the solution $u$ of \eqref{eq:model problem} possesses two
exponential layers at $x=1$ and $y=1$. For our later analysis we shall suppose
that $u$ can be split into a regular solution component and
various layer parts:
\newtheorem{assumption}[theorem]{Assumption}
\begin{assumption}\label{assumption-regularity}
Assume that the solution of \eqref{eq:model problem} can be decomposed as
\begin{equation}\label{eq:(2.1a)}
u=S+E_{1}+E_{2}+E_{12},\quad \forall (x,y)\in\bar{\Omega}.
\end{equation}
For $0\le i+j \le 2$, the regular part satisfies
\begin{equation*}\label{eq:(2.1b)}
\left|\frac{\partial^{i+j}S}{\partial x^{i}\partial y^{j}}(x,y) \right|\le C,
\end{equation*}
while for  $0\le i+j \le 3$,  the layer terms satisfy\upshape{:}
\begin{equation*}\label{eq:(2.1c)}
\left|\frac{\partial^{i+j}E_{1}}{\partial x^{i}\partial y^{j}}(x,y) \right|\le C\varepsilon^{-i}e^{-\beta_{1}(1-x)/\varepsilon},
\end{equation*}
\begin{equation*}\label{eq:(2.1d)}
\left|\frac{\partial^{i+j}E_{2}}{\partial x^{i}\partial y^{j}}(x,y) \right|\le C\varepsilon^{-j}e^{-\beta_{2}(1-y)/\varepsilon},
\end{equation*}
and
\begin{equation*}\label{eq:(2.1e)}
\quad\quad\quad
\left|\frac{\partial^{i+j}E_{12}}{\partial x^{i}\partial y^{j}}(x,y) \right|\le C\varepsilon^{-(i+j)}e^{-(\beta_{1}(1-x)+\beta_{2}(1-y))/\varepsilon}.
\end{equation*}
\end{assumption}
\begin{remark}
Conditions on the data of the problem that guarantee the existence of this decomposition are given in \cite[Theorem 5.1]{Linb1Styn2:2001-Asymptotic}.
\end{remark}

\subsection{Shishkin meshes }
When discretizing \eqref{eq:model problem}, first we divide the domain  $\Omega$
into four subdomains (see Figure \ref{Shishkin mesh})
\begin{align*}
&\Omega_{s}:=\left[0,1-\lambda_{x}\right]\times\left[0,1-\lambda_{y}\right],&&
\Omega_{x}:=\left[ 1-\lambda_{x},1 \right]\times\left[0,1-\lambda_{y}\right],\\
&\Omega_{y}:=\left[0,1-\lambda_{x}\right]\times\left[1-\lambda_{y},1 \right],&&
\Omega_{xy}:=\left[ 1-\lambda_{x},1 \right]\times\left[1-\lambda_{y},1 \right].
\end{align*}
Here $\lambda_x$ and $\lambda_y$ are mesh transition parameters which are used to separate the domain $\Omega$ into the smooth part and different layer parts. They are defined as follows:
\begin{equation*}
\lambda_{x}:=\min\left\{ \frac{1}{2},\rho\frac{\varepsilon}{\beta_{1}}\ln N \right\} \quad \mbox{and} \quad
\lambda_{y}:=\min\left\{
\frac{1}{2},\rho\frac{\varepsilon}{\beta_{2}}\ln N \right\}.
\end{equation*}
In this paper, we set $\rho=2.5$ for technical reasons, see \cite[Remark 2.1]{Zhang:2003-Finite} for the discussions of the different  choices of $\rho$.
\begin{assumption}
Assume that $\varepsilon\le N^{-1}$, as is generally the case in practice. Furthermore assume that
$\lambda_{x}=\rho\varepsilon\beta^{-1}_{1}\ln N$ and $\lambda_{y}=\rho\varepsilon\beta^{-1}_{2}\ln N$ as otherwise $N^{-1}$ is exponentially small compared with $\varepsilon$.
\end{assumption}

Each subdomain is then decomposed into $N/2\times N/2$ ($N\ge 4$ is a positive even integer) uniform rectangles and uniform triangles by drawing the diagonal 
in each rectangles (see Figure \ref{Shishkin mesh}). This yields a piecewise uniform triangulation of $\Omega$  denoted by $\mathcal{T}_{N}$.  Therefore, there are $N^2$ nodes $(x_i,y_j)$, $i,j=0,1,\ldots,N$ and $2N^2$
triangle elements.

We denote $h_{x,i}:=x_{i+1}-x_{i}$ and $h_{y,i}:=y_{i+1}-y_{i}$  which
satisfy
\begin{align*}
&N^{-1}\le h_{x,i},h_{y,i} \le 2N^{-1} \quad \text{if $i=0,1,\ldots,N/2-1$}\\
&C_{1}\varepsilon N^{-1}\ln N \le h_{x,i},h_{y,i}\le C_{2}\varepsilon N^{-1}\ln N.
 \quad \text{if $i=N/2,N/2+1,\ldots,N-1$}
\end{align*}
 For mesh elements we shall use some notations: $K^1_{i,j}$ for the mesh triangle with vertices $(x_i,y_j)$, $(x_{i+1},y_j)$ and $(x_i,y_{j+1})$; $K^2_{i,j}$  for the mesh triangle with vertices
$(x_i,y_{j+1})$, $(x_{i+1},y_j)$ and $(x_{i+1},y_{j+1})$ (see Fig. \ref{fig:code of mesh});  $K$ for a generic mesh triangle.

\begin{figure}
\begin{minipage}[t]{0.5\linewidth}
\centering
\includegraphics[width=2.5in]{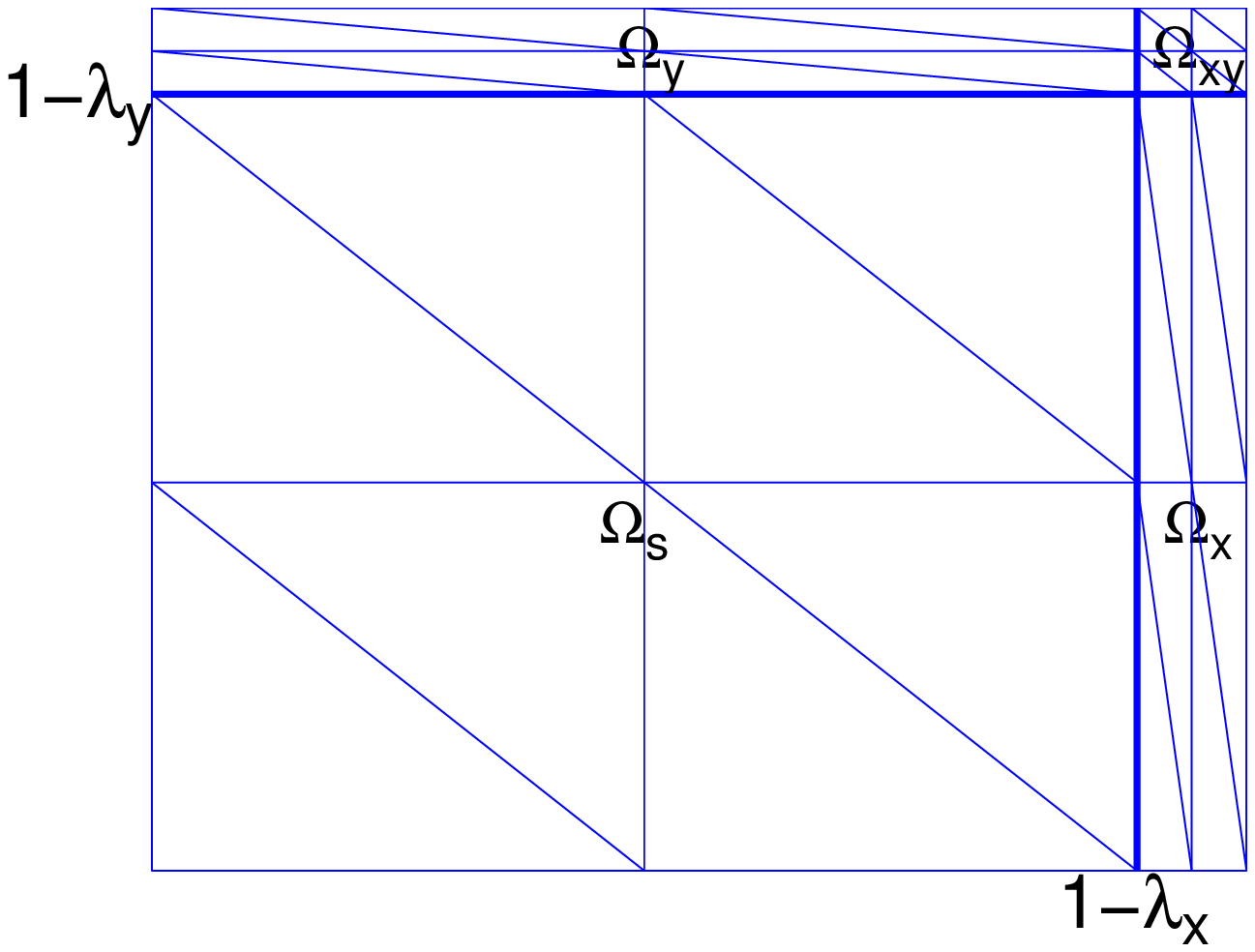}
\caption{Dissection of $\Omega$ and triangulation $\mathcal{T}_{N}$.}
\label{Shishkin mesh}
\end{minipage}%
\begin{minipage}[t]{0.5\linewidth}
\centering
\includegraphics[width=2.5in]{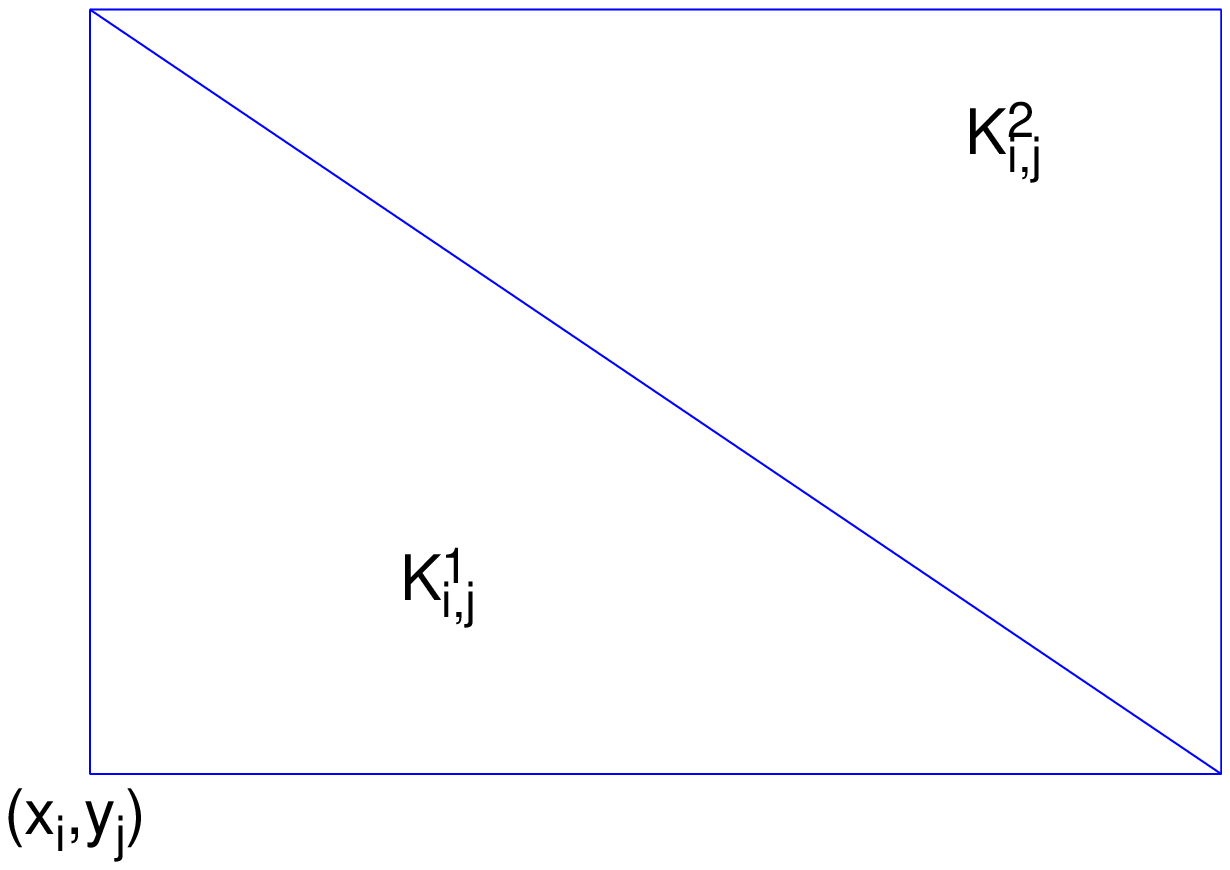}
\caption{$K^{1}_{i,j}$ and $K^{2}_{i,j}$}
\label{fig:code of mesh}
\end{minipage}
\end{figure}

\subsection{The streamline diffusion finite element method}

Let $V:=H^{1}_{0}(\Omega)$. A weak formulation of
the model problem \eqref{eq:model problem} reads:
Find $u\in V$ such that 
\begin{equation}\label{eq:weak formulation}
\varepsilon (\nabla u,\nabla v)+(\boldsymbol{b}\cdot\nabla u+cu,v)=(f,v), \quad \forall  v\in V.
\end{equation}
Note that the variational formulation \eqref{eq:weak formulation} has a unique solution by means of the
Lax-Milgram Lemma.
\par
Let $V^N\subset V$ be the $C^0$ linear finite element space on the Shishkin mesh. The SDFEM reads: Find $u^{N}\in V^{N}$ such that 
\begin{equation}\label{eq:SDFEM}
 a_{SD}(u^{N},v^{N})=(f,v^{N})+\underset{K\subset\Omega}\sum(f,\delta_{K}\boldsymbol{b}\cdot\nabla v^{N})_{K},\quad \forall v^{N}\in V^{N}
\end{equation}
where
\begin{equation*}
a_{SD}(u^{N},v^{N})=a_{Gal}(u^{N},v^{N})+a_{stab}(u^{N},v^{N})
\end{equation*}
and
\begin{align*}
a_{Gal}(u^{N},v^{N})&=\varepsilon (\nabla u^{N},\nabla v^{N})+(\boldsymbol{b}\cdot\nabla u^{N}+cu^{N},v^{N})\\
a_{stab}(u^{N},v^{N})&=\sum_{K\subset\Omega}(-\varepsilon\Delta u^{N}+\boldsymbol{b}\cdot\nabla u^{N}+cu^{N},\delta_{K}\boldsymbol{b}\cdot\nabla v^{N})_{K}.
\end{align*}
Note that $\Delta u^N=0$ in $K$ for $u^N\vert_K\in P_1(K)$.  Following usual practice
\cite{Roo1Sty2Tob3:2008-Robust},  the parameter $\delta_{K}=\delta_{K}(x,y)$ is defined as follows
\begin{equation}\label{eq: delta-K}
\delta_{K}:=
\left\{
\begin{array}{cc}
C^{\ast}N^{-1},&\text{if $K\subset\Omega_{s}$},\\
0,&\text{otherwise},
\end{array}
\right.
\end{equation}
where $C^{\ast}$ is a properly defined positive constant such that the following coercivity holds  (see \cite[Lemma 3.25]{Roo1Sty2Tob3:2008-Robust})
\begin{equation}\label{eq:SD coercivity}
a_{SD}(v^{N},v^{N})\ge \frac{1}{2} \Vert v^{N} \Vert^2_{SD},
\quad \forall v^{N}\in V^{N}.
\end{equation}

We define an energy norm associated with $a_{Gal}(\cdot,\cdot)$ and the streamline diffusion norm (SD norm) associated with $a_{SD}(\cdot,\cdot)$:
\begin{align}
\Vert v^{N} \Vert^{2}_{\varepsilon}:&=
\varepsilon \vert v^{N} \vert^{2}_{1}+ \mu_0\Vert v^{N} \Vert^{2}\label{eq:energy norm},\\
\Vert v^{N} \Vert^{2}_{SD}:&=
\varepsilon \vert v^{N} \vert^{2}_{1}+\mu_{0}\Vert v^{N} \Vert^{2}
+\sum_{K\subset\Omega} \delta_{K}\Vert 
\b{b}\cdot\nabla v^{N}\Vert^{2}_{K}.\label{eq:SD norm}
\end{align}

Form \eqref{eq:weak formulation} and \eqref{eq:SDFEM},  we have the following orthogonality 
\begin{equation}\label{eq:orthogonality of SD}
a_{SD}(u-u^N,v^N)=0,\quad \forall v^N\in V^N.
\end{equation}

\subsection{Preliminary}\label{sub-section: preliminary}
In this subsection, we will present our integral inequalities and some interpolation bounds, which are useful for our main results.  For convenience,  we denote
$$\partial^{l}_x\partial^{m}_y v:=\frac{\partial^{l+m}v}{\partial x^l\partial y^m}.$$

Our later analysis depends on the following integral inequalities, by which
we could obtain sharper estimates for the diffusion part in the bilinear form.
Define $\mathcal{Q}_{i,j}=K^{2}_{i,j-1}\cup K^{1}_{i,j}$ and $\mathcal{S}_{i,j}=K^{2}_{i-1,j}\cup K^{1}_{i,j}$ (see Fig. \ref{fig:Q} and \ref{fig:S}), where $h_{y,j-1}=h_{y,j}$ in $\mathcal{Q}_{i,j}$
and $h_{x,i-1}=h_{x,i}$ in $\mathcal{S}_{i,j}$.

\begin{figure}
\begin{minipage}[t]{0.5\linewidth}
\centering
\includegraphics[width=2.5in]{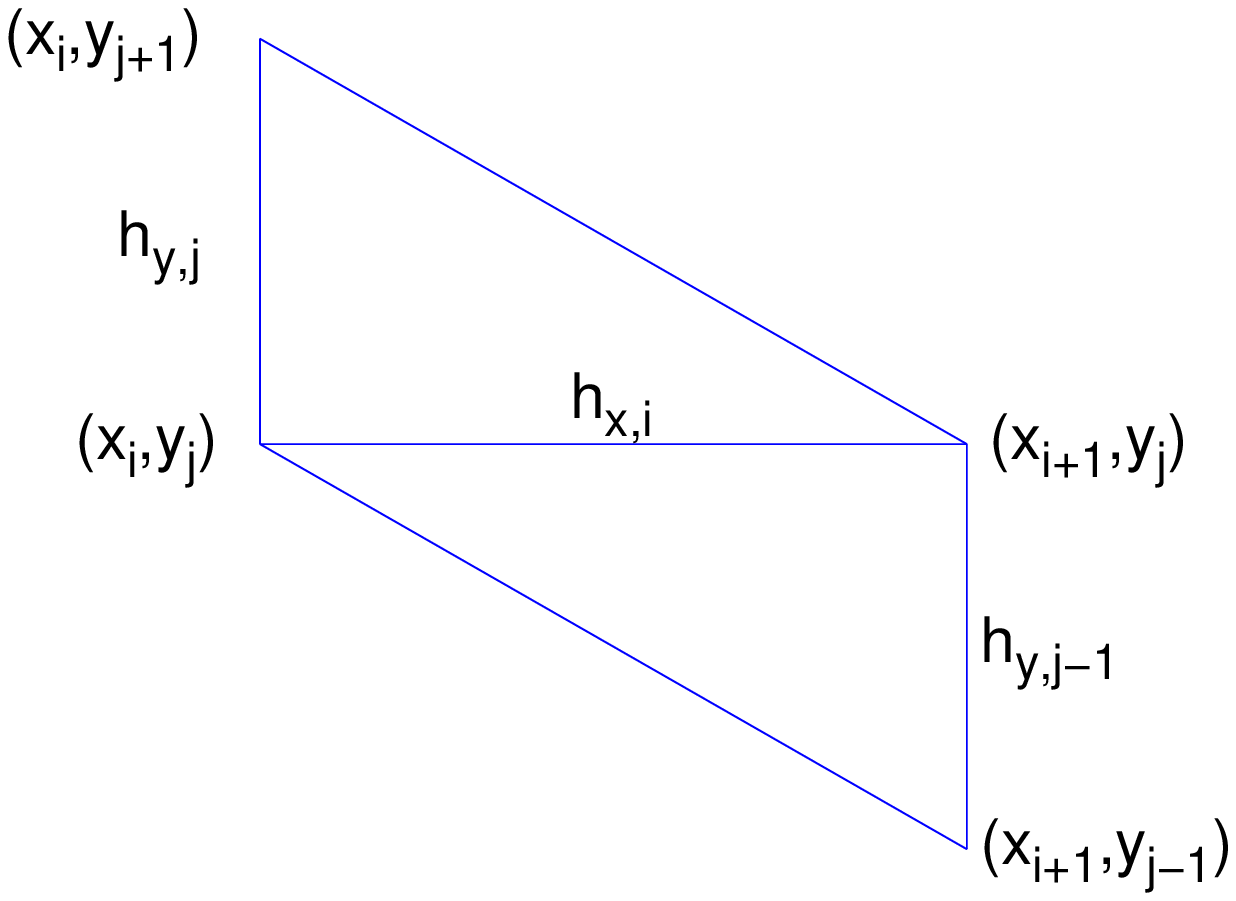}
\caption{Structure of $\mathcal{Q}_{i,j}$}
\label{fig:Q}
\end{minipage}%
\begin{minipage}[t]{0.5\linewidth}
\centering
\includegraphics[width=2.5in]{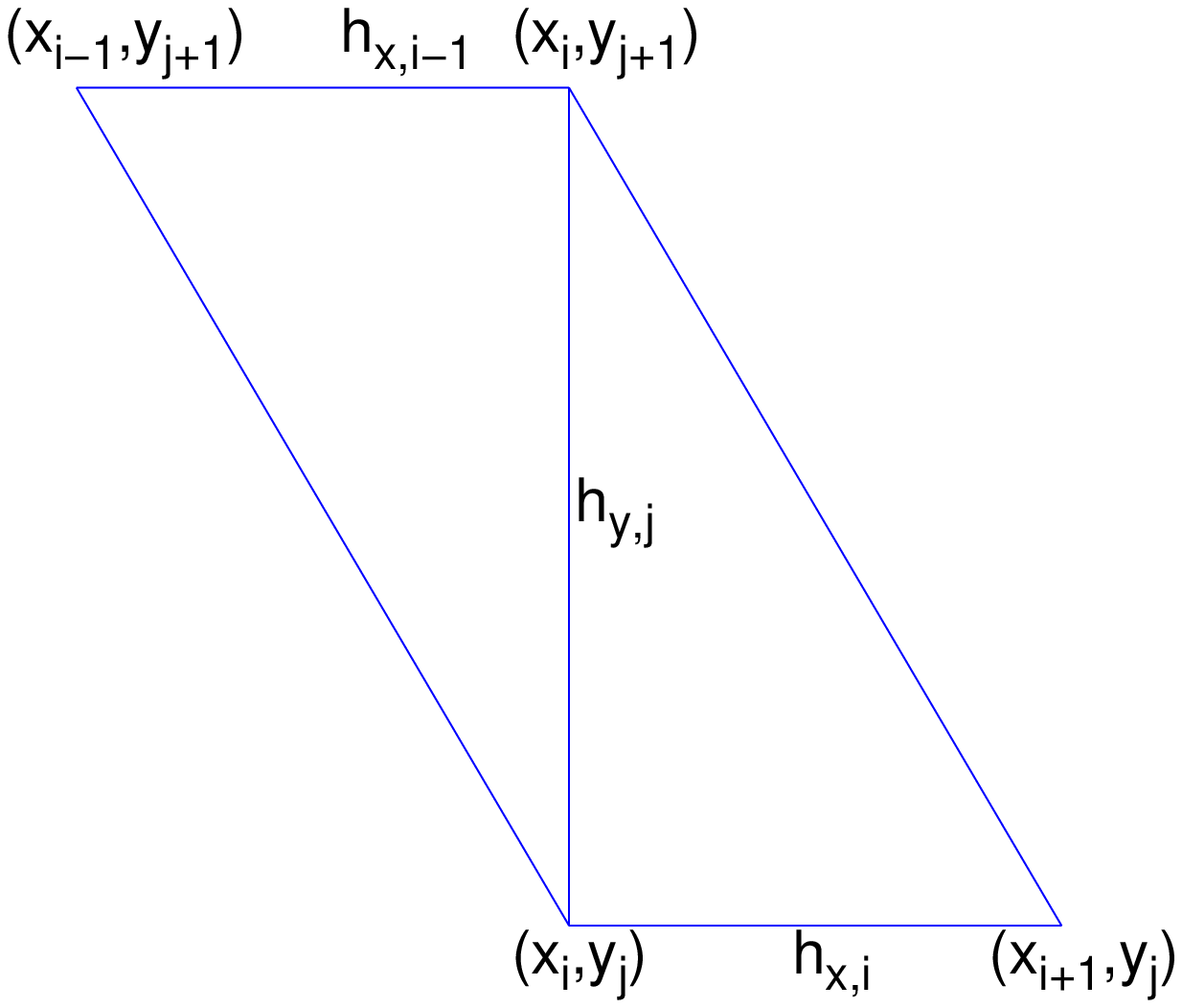}
\caption{Structure of $\mathcal{S}_{i,j}$}
\label{fig:S}
\end{minipage}
\end{figure}

\begin{lemma}\label{lem:superconvergence-diffusion-term}
 Assume that $w\in  C^3(\bar{\Omega})$  and $v^N\in V^N$.   Let $w^I$ be the standard nodal linear interpolation on $\mathcal{T}_N$. Then 
we have
\begin{align}
&\left\vert \int_{\mathcal{Q}_{i,j}}  (w-w^{I})_{x}v^{N}_{x}\mr{d}x\mr{d}y\right\vert
\le 
C\sum_{l+m=2}h^{l}_{x,i}h^{m}_{y,j} \Vert \partial^{l+1}_x\partial^m_y w \Vert_{L^{\infty}(\mathcal{Q}_{i,j})}
\Vert v^{N}_{x} \Vert_{L^{1}(\mathcal{Q}_{i,j})},
\label{eq:diffusion-term x}\\
&\left\vert \int_{\mathcal{S}_{i,j}}  (w-w^{I})_{y}v^{N}_{y}\mr{d}x\mr{d}y\right\vert
\le 
C\sum_{l+m=2}h^{l}_{x,i}h^{m}_{y,j}\Vert \partial^{l}_x\partial^{m+1}_y w \Vert_{L^{\infty}(\mathcal{S}_{i,j})}
\Vert v^{N}_{y} \Vert_{L^{1}(\mathcal{S}_{i,j})}
\label{eq:diffusion-term y}
\end{align}
where $l$ and $m$ are nonnegative integers.
\end{lemma}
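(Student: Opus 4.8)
The plan is to prove \eqref{eq:diffusion-term x} by working on the two-triangle patch $\mathcal{Q}_{i,j}$, where the horizontal mesh size is $h_{x,i}$ and the vertical size is the common value $h:=h_{y,j-1}=h_{y,j}$. On this patch $v^N_x$ is piecewise constant: write $v^N_x = a$ on $K^2_{i,j-1}$ and $v^N_x = b$ on $K^1_{i,j}$. The key observation is that the two triangles are reflections of each other through the horizontal line $y = y_j$, so we can reduce the integral over $\mathcal{Q}_{i,j}$ to an integral over a single reference-type triangle. First I would make the affine change of variables sending each triangle to a standard triangle, so the claim becomes a statement about the linear interpolation error $w - w^I$ in one variable integrated against a constant; after pulling out the constants $a$ and $b$, the right-hand factor $\Vert v^N_x\Vert_{L^1(\mathcal{Q}_{i,j})}$ becomes (up to the area, which is $\sim h_{x,i}h$) a bound on $|a|$ and $|b|$, so it suffices to estimate $\left|\int_{\mathcal{Q}_{i,j}}(w-w^I)_x\,\mathrm{d}x\,\mathrm{d}y\right|$ by $C\sum_{l+m=2}h_{x,i}^{l}h^{m}\Vert\partial_x^{l+1}\partial_y^m w\Vert_{L^\infty}\cdot h_{x,i}h$, and similarly on each triangle separately.

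The heart of the matter is the cancellation between the two triangles. On a single triangle, $\int (w-w^I)_x$ is only $O(h_{x,i}^2 h)\cdot \Vert w\Vert_{C^2}$ in general, which is one power of $h$ short of what we need (we need an extra factor of $h_{x,i}$ or $h$, i.e. a total of three derivatives on $w$). The gain comes from combining $K^2_{i,j-1}$ and $K^1_{i,j}$: since these share the edge from $(x_i,y_j)$ to $(x_{i+1},y_j)$ and are mirror images across $y=y_j$, the leading $O(h^2)$ error terms — which depend only on second derivatives of $w$ evaluated on that shared structure — cancel when added, leaving a remainder controlled by third derivatives. Concretely, I would expand $w$ in a Taylor polynomial of degree two about the vertex $(x_i,y_j)$, write $w = P_2 + R$ with $R = O(\Vert w\Vert_{C^3}\cdot(\text{diam})^3)$, note that $P_2^I$ interpolates $P_2$ exactly up to the quadratic terms and handle the pure quadratic monomials $x^2, xy, y^2$ by direct computation of $\int_{\mathcal{Q}_{i,j}} (m - m^I)_x$ for each monomial $m$, showing these integrals vanish (this is where the symmetry of the patch is used), and finally bound the contribution of $R$ by $C\Vert w\Vert_{C^3}$ times the patch area times $(h_{x,i}+h)^2$, which after distributing gives exactly $\sum_{l+m=2}h_{x,i}^l h^m\Vert\partial_x^{l+1}\partial_y^m w\Vert_{L^\infty}$.

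The main obstacle will be verifying that the quadratic-monomial integrals actually cancel across the two triangles — this is the one genuinely computational point, and it is where the specific geometry (equal vertical mesh widths $h_{y,j-1}=h_{y,j}$, the orientation of the diagonals defining $K^1$ and $K^2$) is essential; if the diagonals were drawn the other way, or the heights unequal, the cancellation would fail, so the bookkeeping of which vertices belong to which triangle must be done carefully. A secondary technical point is tracking the anisotropy: because $h_{x,i}$ and $h$ can be of vastly different orders on a Shishkin mesh, one must keep the $x$- and $y$-directions separate throughout (hence the mixed norms $\Vert\partial_x^{l+1}\partial_y^m w\Vert_{L^\infty}$ with all three splittings $l+m=2$) rather than using an isotropic diameter bound. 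Once \eqref{eq:diffusion-term x} is established, \eqref{eq:diffusion-term y} follows by the same argument with the roles of $x$ and $y$ interchanged and the patch $\mathcal{S}_{i,j}$ (two triangles sharing a vertical edge, mirror images across $x=x_i$, with equal horizontal widths $h_{x,i-1}=h_{x,i}$) replacing $\mathcal{Q}_{i,j}$.
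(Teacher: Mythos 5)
There is a genuine gap, and it is precisely the observation on which the paper's proof turns: $v^N_x$ is not merely piecewise constant on $\mathcal{Q}_{i,j}$ — it is a \emph{single} constant on the whole patch. The two triangles $K^2_{i,j-1}$ and $K^1_{i,j}$ share the horizontal edge from $(x_i,y_j)$ to $(x_{i+1},y_j)$; continuity of $v^N$ across that edge forces the tangential derivative, which is exactly $\partial_x$, to agree from both sides, so your $a$ and $b$ coincide. This is what legitimizes pulling out one constant and reducing \eqref{eq:diffusion-term x} to a bound on $\bigl|\int_{\mathcal{Q}_{i,j}}(w-w^I)_x\,\mathrm{d}x\,\mathrm{d}y\bigr|$, which is where the cross-triangle cancellation lives. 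Your proposal instead allows $a\neq b$ and then asserts that it ``suffices to estimate \dots on each triangle separately''; but the single-triangle estimate with third derivatives is simply false — for $w=x^2$ one computes $\int_{K^1_{i,j}}(w-w^I)_x\,\mathrm{d}x\,\mathrm{d}y=-h_{x,i}^2h_{y,j}/6$ while all third derivatives vanish — a fact you yourself concede two sentences later (``one power of $h$ short''). So as written the reduction step is internally inconsistent: with possibly different weights on the two triangles the cancellation between them cannot be invoked, and without cancellation the per-triangle bound cannot give \eqref{eq:diffusion-term x}. Once the identity $a=b$ is added, your Taylor-expansion plan becomes essentially the paper's proof: expand $(w-w^I)_x$ about $(x_i,y_j)$, note that the terms $w_{xx}(x_i,y_j)(x-x_i-\tfrac{h_{x,i}}{2})$ and $w_{xy}(x_i,y_j)(y-y_j)$ integrate to zero over the patch, and the remainder carries the third derivatives with the correct anisotropic weights $h_{x,i}^l h_{y,j}^m$, $l+m=2$.

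A secondary inaccuracy: the symmetry you invoke is misidentified. The two triangles are \emph{not} mirror images across $y=y_j$; rather $K^1_{i,j}$ is the image of $K^2_{i,j-1}$ under the point reflection through the midpoint $(x_i+h_{x,i}/2,\,y_j)$ of the shared edge, so that $\mathcal{Q}_{i,j}$ is a centrally symmetric parallelogram (this is where $h_{y,j-1}=h_{y,j}$ enters). It is this central symmetry about the point $(x_i+h_{x,i}/2,\,y_j)$ that makes $\int_{\mathcal{Q}_{i,j}}(x-x_i-h_{x,i}/2)\,\mathrm{d}x\,\mathrm{d}y=\int_{\mathcal{Q}_{i,j}}(y-y_j)\,\mathrm{d}x\,\mathrm{d}y=0$ and hence kills the quadratic-monomial contributions; your explicit monomial computations would still reveal the cancellation, but the geometric reason you give for expecting it is wrong. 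The same two corrections apply to \eqref{eq:diffusion-term y}: there the shared edge of $\mathcal{S}_{i,j}$ is vertical, which is exactly why it is $v^N_y$ (not $v^N_x$) that is constant on that patch, with $h_{x,i-1}=h_{x,i}$ providing the central symmetry.
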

\begin{proof}
Recalling $v^N\in V^N$ and noticing that $v^{N}_{x}$ is constant in $\mathcal{Q}_{i,j}$, then we have
\begin{equation}\label{eq: vN-x constant}
\int_{\mathcal{Q}_{i,j}}(w-w^{I})_{x}v^{N}_{x}\mr{d}x\mr{d}y=v^{N}_{x} \int_{\mathcal{Q}_{i,j}}(w-w^{I})_{x}\mr{d}x\mr{d}y. 
\end{equation}
Using Taylor expansion in $(x_{i},y_{j})$ for $(w-w^{I})_{x}$ in $\mathcal{Q}_{i,j}$, we obtain
\begin{align*}
&(w-w^I)_{x}=w_{xx}(x_i,y_j)(x-x_i)+w_{xy}(x_i,y_j)(y-y_j)-\frac{h_{x,i}} {2} w_{xx}(x_i,y_j)+\mathcal{R}_{i,j}
\end{align*}
where 
$$
\Vert \mathcal{R}_{i,j} \Vert_{L^{\infty}(\mathcal{Q}_{i,j})}\le
C\sum_{l+m=2}h^{l}_{x,i}h^{m}_{y,j}\Vert \partial^{l+1}_x\partial^m_y w \Vert_{L^{\infty}(\mathcal{Q}_{i,j})}. 
$$ 
Direct calculations yield
\begin{align}
&\left\vert \int_{\mathcal{Q}_{i,j}}(w-w^I)_{x} \mr{d}x\mr{d}y \right\vert=\left\vert \int_{\mathcal{Q}_{i,j}} \mathcal{R}_{i,j} \mr{d}x\mr{d}y \right\vert
\label{eq:u-uI-x-in Q}\\
\le &
C\mr{meas}(\mathcal{Q}_{i,j})\sum_{l+m=2}h^{l}_{x,i}h^{m}_{y,j} \Vert \partial^{l+1}_x\partial^m_y w \Vert_{L^{\infty}(\mathcal{Q}_{i,j})}
\nonumber.
\end{align}
Combing \eqref{eq: vN-x constant} and \eqref{eq:u-uI-x-in Q},  we obtain \eqref{eq:diffusion-term x}.
The analysis of \eqref{eq:diffusion-term y} is similar to one of \eqref{eq:diffusion-term x}.
\end{proof}
\begin{remark}
Lemma \ref{lem:superconvergence-diffusion-term} 
could be regarded as a simplified version of  \cite[Lemma 2.3]{Bank1Xu2:2003-Asymptotically}  and similar result has appeared in \cite[Lemma 1]{Che1Lin2Zho3etc:2013-Uniform} for uniform meshes.
%
 In \cite{Roos:2006-Superconvergence}, the author combined Lemma 2.3 in \cite{Bank1Xu2:2003-Asymptotically} 
with Bramble-Hilbert Lemma to analyze the diffusion part only in $\Omega_s$.  
Our later analysis, Lemmas \ref{lem: uI-uN-I} and \ref{lem: uI-uN-II},  shows that the most difficult part in the analysis
 is the diffusion part of the bilinear form in $\Omega\setminus \Omega_s$.
\end{remark}

For analysis on Shishkin meshes, we need the following  anisotropic interpolation error bounds given in 
\cite[Lemma 3.2]{Guo1Styn2:1997-Pointwise}.
\begin{lemma}\label{lem: anisotropic interpolation-triangle}
Let $K\in\mathcal{T}_N$ and $p\in (1,\infty]$ and
suppose that $K$ is $K^1_{i,j}$ or $K^2_{i,j}$. Assume
that $w\in W^{2,p}(\Omega)$ and  $w^I$ is the standard nodal linear interpolation on $\mathcal{T}_N$.  Then
\begin{align*}
&\Vert w-w^I \Vert_{L^p(K)}\le
C\sum_{l+m=2}h^l_{x,i}h^m_{y,j}\Vert \partial^l_x\partial^m_y w \Vert_{L^p(K)},\\
&\Vert (w-w^I)_x \Vert_{L^p(K)}\le
C\sum_{l+m=1}h^l_{x,i}h^m_{y,j}\Vert \partial^{l+1}_x\partial^m_y w \Vert_{L^p(K)},\\
&\Vert (w-w^I)_y \Vert_{L^p(K)}\le
C\sum_{l+m=1}h^l_{x,i}h^m_{y,j}\Vert \partial^{l}_x\partial^{m+1}_y w \Vert_{L^p(K)}
\end{align*}
where $l$ and $m$ are nonnegative integers.
\end{lemma}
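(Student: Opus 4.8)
The plan is to rescale each mesh triangle to a fixed reference triangle, apply the Bramble--Hilbert lemma there, and scale back, the only delicate point being to do this in a way that respects the anisotropy of the mesh. Since $K^{2}_{i,j}$ is the image of $K^{1}_{i,j}$ under the point reflection through the midpoint of $[x_{i},x_{i+1}]\times[y_{j},y_{j+1}]$ --- an affine map whose linear part is $\mr{diag}(-1,-1)$ and hence preserves the modulus of every partial derivative --- it suffices to treat $K=K^{1}_{i,j}$. Let $\hat K=\{(\hat x,\hat y):\hat x\ge 0,\ \hat y\ge 0,\ \hat x+\hat y\le 1\}$ and $F:\hat K\to K$, $F(\hat x,\hat y)=(x_{i}+h_{x,i}\hat x,\ y_{j}+h_{y,j}\hat y)$. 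For $\hat w:=w\circ F$ one has $\widehat{w^{I}}=\hat w^{I}$ (nodal linear interpolation commutes with affine maps), $\partial^{l}_{\hat x}\partial^{m}_{\hat y}\hat w=h^{l}_{x,i}h^{m}_{y,j}\,(\partial^{l}_{x}\partial^{m}_{y}w)\circ F$, and $\|u\circ F\|_{L^{p}(\hat K)}=(h_{x,i}h_{y,j})^{-1/p}\|u\|_{L^{p}(K)}$ (with $1/p$ read as $0$ when $p=\infty$). Under these identities the first claimed inequality becomes $\|\hat w-\hat w^{I}\|_{L^{p}(\hat K)}\le C|\hat w|_{W^{2,p}(\hat K)}$, which is exactly the Bramble--Hilbert lemma for the operator $w\mapsto w-w^{I}$: it is bounded on $W^{2,p}(\hat K)$ since $W^{2,p}(\hat K)\hookrightarrow C(\bar{\hat K})$ for $p>1$, and it annihilates $P_{1}(\hat K)$. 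Transforming back produces the first estimate with the full set $l+m=2$ of second derivatives.

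For the two derivative estimates the same recipe used bluntly would again leave the whole seminorm $|\hat w|_{W^{2,p}(\hat K)}$ on the right, and the term $\|\partial^{2}_{\hat y}\hat w\|_{L^{p}(\hat K)}$ would scale back to $h^{-1}_{x,i}h^{2}_{y,j}\|w_{yy}\|_{L^{p}(K)}$, i.e.\ with an inadmissible negative power of $h_{x,i}$; removing this term is the crux of the proof. To do so I would exploit the explicit shape of $\hat K$: in barycentric coordinates $1-\hat x-\hat y,\ \hat x,\ \hat y$ one computes $\partial_{\hat x}\hat w^{I}=\hat w(1,0)-\hat w(0,0)=\int_{0}^{1}\partial_{\hat x}\hat w(t,0)\,\mr{d}t$, so that, writing $g:=\partial_{\hat x}\hat w\in W^{1,p}(\hat K)$,
\[
\partial_{\hat x}(\hat w-\hat w^{I})=g-\int_{0}^{1}g(t,0)\,\mr{d}t .
\]
The operator $g\mapsto g-\int_{0}^{1}g(\cdot,0)\,\mr{d}t$ is bounded from $W^{1,p}(\hat K)$ into $L^{p}(\hat K)$ (trace theorem on the edge $\hat y=0$; trivial for $p=\infty$) and it annihilates constants, so Bramble--Hilbert at the $W^{1,p}$ level gives $\|\partial_{\hat x}(\hat w-\hat w^{I})\|_{L^{p}(\hat K)}\le C|g|_{W^{1,p}(\hat K)}\le C\big(\|\partial^{2}_{\hat x}\hat w\|_{L^{p}(\hat K)}+\|\partial_{\hat x}\partial_{\hat y}\hat w\|_{L^{p}(\hat K)}\big)$, with no $\|\partial^{2}_{\hat y}\hat w\|$ present. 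Scaling back yields $\|(w-w^{I})_{x}\|_{L^{p}(K)}\le C\big(h_{x,i}\|w_{xx}\|_{L^{p}(K)}+h_{y,j}\|w_{xy}\|_{L^{p}(K)}\big)$, which is the second estimate. The third estimate follows identically, now using $\partial_{\hat y}\hat w^{I}=\int_{0}^{1}\partial_{\hat y}\hat w(0,s)\,\mr{d}s$ and integrating along the edge $\hat x=0$, so that only $w_{xy}$ and $w_{yy}$ survive on the right.

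I expect the change-of-variables bookkeeping to be routine; the genuine obstacle is the point just isolated, namely that one must \emph{not} apply the Bramble--Hilbert lemma directly to $w\mapsto(w-w^{I})_{x}$ (which costs a full $W^{2,p}$ seminorm and destroys the anisotropic balance), but instead to a first-order operator acting on $g=\partial_{\hat x}w$, which is what keeps only the pure-$x$ and mixed second derivatives. A minor technical matter is to verify that the point values, edge traces and edge integrals appearing above are meaningful; they are, because $W^{2,p}(\Omega)\hookrightarrow C(\bar\Omega)$ and $W^{1,p}(\hat K)$ has $L^{p}$ traces on $\partial\hat K$ throughout the admissible range $p\in(1,\infty]$. (This is of course the known estimate of Guo and Stynes; the above is merely the route by which I would rederive it.)
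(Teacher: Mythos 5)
Your argument is correct. Note that the paper itself does not prove this lemma at all: it is quoted verbatim from Guo and Stynes (IMA J.\ Numer.\ Anal.\ 17 (1997), Lemma 3.2), so there is no in-paper proof to compare with; your derivation is the standard anisotropic-interpolation argument one finds in that literature (Apel--Dobrowolski style), and you have isolated exactly the point that makes the naive scaling fail, namely that Bramble--Hilbert must not be applied to $w\mapsto(w-w^I)_x$ on the reference triangle (which would bring in $\partial^2_{\hat y}\hat w$ and hence an inadmissible $h_{x,i}^{-1}h_{y,j}^{2}$ after scaling back), but to the first-order operator $g\mapsto g-\int_0^1 g(t,0)\,\mathrm{d}t$ acting on $g=\partial_{\hat x}\hat w$, which annihilates constants and only costs $\partial^2_{\hat x}\hat w$ and $\partial_{\hat x}\partial_{\hat y}\hat w$. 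The supporting details check out: $W^{2,p}(\hat K)\hookrightarrow C(\overline{\hat K})$ for $p>1$ in two dimensions makes the nodal values meaningful, the edge trace of $W^{1,p}$ makes the edge integral bounded (trivially for $p=\infty$), the explicit formula $\partial_{\hat x}\hat w^I=\hat w(1,0)-\hat w(0,0)$ is right for the reference triangle, the scaling bookkeeping $(h_xh_y)^{\pm 1/p}$ and $h_x^l h_y^m$ is consistent, and the reduction of $K^2_{i,j}$ to $K^1_{i,j}$ by the point reflection through the cell center is legitimate since that affine map permutes the vertices and preserves the moduli of all partial derivatives. The only cosmetic caveat is that for $1<p<\infty$ one should (as you indicate) justify the fundamental-theorem-of-calculus identity on the edge via traces or by density of smooth functions, but this is routine.
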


\begin{lemma}\label{lem: linear interpolation}
Let $u^{I}$ and $E^{I}$ denote the piecewise linear
interpolation of $u$ and $E$, respectively,  on the Shishkin mesh $\mathcal{T}_{N}$, where $E=E_{1}+E_{2}+E_{12}$. Suppose that $u$ satisfies Assumption \ref{assumption-regularity}. Then
\begin{align*}
&\Vert u-u^{I} \Vert_{L^{\infty}(K)}\le
\left\{
\begin{array}{ll}
CN^{-2},&\text{if $K \subset\Omega_{s}$,}\\
CN^{-2}\ln^{2}N, &\text{otherwise;}
\end{array}
\right.\\
&\Vert \nabla E^I \Vert_{L^{1}(\Omega_{s})}\le CN^{-\rho},\quad \Vert (E^{I}_{1})_x \Vert_{L^{\infty}(\Omega_y)}\le C\varepsilon^{-1}N^{-\rho},\\
&\Vert (E^{I}_{2})_x \Vert_{L^{\infty}(\Omega_x)}\le CN^{-\rho},\quad \Vert \nabla E^{I}_{12}  \Vert_{L^{\infty}(\Omega_{x}\cup\Omega_y)}\le C\varepsilon^{-1}N^{-\rho}.
\end{align*}
\end{lemma}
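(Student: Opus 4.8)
My plan is to treat the four displayed estimates separately, in each case reducing to the anisotropic bounds of Lemma \ref{lem: anisotropic interpolation-triangle} combined with the pointwise derivative bounds in Assumption \ref{assumption-regularity}, and then summing the local estimates over the relevant mesh triangles. For the first estimate, $\Vert u-u^I\Vert_{L^\infty(K)}$, I would split $u=S+E_1+E_2+E_{12}$ and estimate each piece on a generic triangle $K$. Inside $\Omega_s$ all mesh widths are $h_{x,i},h_{y,j}\le 2N^{-1}$; applying the $L^\infty$ case ($p=\infty$) of Lemma \ref{lem: anisotropic interpolation-triangle} to $S$ gives $CN^{-2}$ directly from $|\partial^{l}_x\partial^m_y S|\le C$, while for the layer components one uses the exponential factors, which in $\Omega_s$ are bounded by $e^{-\beta_1\lambda_x/\varepsilon}=N^{-\rho}$ (and similarly for $E_2$, $E_{12}$), so those contributions are $O(N^{-\rho})$, which is subsumed in $CN^{-2}$ since $\rho=2.5$. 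Outside $\Omega_s$, at least one of $h_{x,i}$ or $h_{y,j}$ is the fine width $O(\varepsilon N^{-1}\ln N)$; the worst term is the one where the coarse width multiplies the second derivative of a layer term. For instance in $\Omega_x$ the term $h_{x,i}^2\Vert\partial_x^2 E_1\Vert_{L^\infty}$ is $C(\varepsilon N^{-1}\ln N)^2\varepsilon^{-2}=CN^{-2}\ln^2N$, and a careful check of every cross term (including $E_{12}$ in $\Omega_{xy}$, $E_2$ in $\Omega_y$, etc.) shows none exceeds $CN^{-2}\ln^2 N$; this case-by-case verification over the subdomains is the bulk of the work but is routine.

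For the second estimate, $\Vert\nabla E^I\Vert_{L^1(\Omega_s)}\le CN^{-\rho}$, I would write $\nabla E^I=\nabla E+(\nabla E^I-\nabla E)$ on each triangle and use the inverse-type/interpolation bound $\Vert(E-E^I)_x\Vert_{L^1(K)}\le C\sum_{l+m=1}h_x^lh_y^m\Vert\partial_x^{l+1}\partial_y^m E\Vert_{L^1(K)}$ from Lemma \ref{lem: anisotropic interpolation-triangle} with $p=1$. On $\Omega_s$ every factor $e^{-\beta_i(1-\cdot)/\varepsilon}$ is $\le N^{-\rho}$ pointwise, so $\Vert\nabla E\Vert_{L^1(\Omega_s)}\le C N^{-\rho}\cdot\mathrm{meas}(\Omega_s)\le CN^{-\rho}$; the interpolation remainder contributes $h\cdot\varepsilon^{-1}$ times the same exponentially small factor, hence is even smaller. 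Summing over all triangles in $\Omega_s$ gives the claim.

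For the last two lines, the estimates for $(E_1^I)_x$ on $\Omega_y$, $(E_2^I)_x$ on $\Omega_x$, and $\nabla E_{12}^I$ on $\Omega_x\cup\Omega_y$, I would again pass through $(E_k^I)_x=(E_k)_x+((E_k^I)_x-(E_k)_x)$ and bound the true derivative pointwise using Assumption \ref{assumption-regularity}, noting that on each of these subdomains the relevant exponential factor is evaluated at a point where it is $\le N^{-\rho}$ (e.g. on $\Omega_y$ we have $x\le 1-\lambda_x$ so $e^{-\beta_1(1-x)/\varepsilon}\le N^{-\rho}$, giving $|(E_1)_x|\le C\varepsilon^{-1}N^{-\rho}$; on $\Omega_x$ we have $y\le 1-\lambda_y$ so $e^{-\beta_2(1-y)/\varepsilon}\le N^{-\rho}$, giving $|(E_2)_x|\le CN^{-\rho}$ since no negative power of $\varepsilon$ accompanies the $x$-derivative of $E_2$; for $E_{12}$ on $\Omega_x\cup\Omega_y$ one of the two exponentials is $\le N^{-\rho}$). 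The interpolation-error correction is controlled by Lemma \ref{lem: anisotropic interpolation-triangle} with $p=\infty$, where one keeps track of which mesh width is coarse on which subdomain; in every case the product of mesh widths, second derivatives, and the small exponential factor is dominated by the stated bound. The main obstacle throughout is purely bookkeeping: one must check that on each subdomain and for each layer component the "bad" combination (coarse mesh width $\times$ largest derivative) still carries enough decay from the exponential or from the $N^{-2}$ to land under the claimed bound; there is no conceptual difficulty, only the need to enumerate the cases without error.
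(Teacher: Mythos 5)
Your overall plan (decompose $u$, argue subdomain by subdomain, exploit the exponential decay) is the right spirit and matches the standard treatment the paper cites, but two concrete steps as you propose them do not go through in the regime $\varepsilon\ll N^{-1}$. First, in the bound for $\Vert u-u^I\Vert_{L^\infty(K)}$ you cannot run every component through Lemma \ref{lem: anisotropic interpolation-triangle}: for a layer component whose layer direction is \emph{coarsely} meshed on the given subdomain, the second derivative carries negative powers of $\varepsilon$ that the exponential factor cannot repair. For instance, for $E_2$ on $\Omega_x$ one has $h_{y,j}\sim N^{-1}$ and $\Vert\partial_y^2E_2\Vert_{L^\infty(\Omega_x)}\le C\varepsilon^{-2}N^{-\rho}$, so Lemma \ref{lem: anisotropic interpolation-triangle} only yields $C\varepsilon^{-2}N^{-2}N^{-\rho}$, which is not $O(N^{-2}\ln^2N)$; the same happens for $E_1$ on $\Omega_y$, for $E_{12}$ on $\Omega_x\cup\Omega_y$, and for all layer parts on $\Omega_s$. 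For exactly these cases one must use $L^\infty$-stability of nodal interpolation, $\Vert E-E^I\Vert_{L^\infty(K)}\le 2\Vert E\Vert_{L^\infty(K)}\le CN^{-\rho}$ (the nodal values are already small), which you invoke implicitly on $\Omega_s$ but then abandon when you claim the remaining cross terms are "routine" via the interpolation lemma. This is precisely the splitting in the Stynes--O'Riordan argument the paper refers to for the first inequality.

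The more serious gap is in the three $\nabla E^I$ bounds. The add-and-subtract route $\nabla E^I=\nabla E+(\nabla E^I-\nabla E)$ together with Lemma \ref{lem: anisotropic interpolation-triangle} cannot give them: the correction term always contains a coarse mesh width multiplied by one extra $\varepsilon^{-1}$, i.e. an unbounded factor $(\varepsilon N)^{-1}$ relative to the target. Concretely, for $\Vert\nabla E^I\Vert_{L^1(\Omega_s)}$ the term $h_{x,i}\Vert\partial_x^2E_1\Vert_{L^1(\Omega_s)}\le CN^{-1}\varepsilon^{-1}N^{-\rho}$ is not $O(N^{-\rho})$ (so your remark that the remainder is "even smaller" is backwards, since $N^{-1}\varepsilon^{-1}\ge1$); for $(E_1^I)_x$ on $\Omega_y$ the term $h_{x,i}\Vert\partial_x^2E_1\Vert_{L^\infty(\Omega_y)}\le CN^{-1}\varepsilon^{-2}N^{-\rho}$ is not $O(\varepsilon^{-1}N^{-\rho})$; for $(E_2^I)_x$ on $\Omega_x$ the term $h_{y,j}\Vert\partial_x\partial_yE_2\Vert_{L^\infty(\Omega_x)}\le CN^{-1}\varepsilon^{-1}N^{-\rho}$ is not $O(N^{-\rho})$. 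Moreover your estimate $\Vert\nabla E\Vert_{L^1(\Omega_s)}\le CN^{-\rho}\,\mathrm{meas}(\Omega_s)$ silently drops the $\varepsilon^{-1}$ in $|\nabla E_1|$, $|\nabla E_{12}|$; the pointwise bound on $\Omega_s$ is $C\varepsilon^{-1}N^{-\rho}$, and one must integrate the exponential to absorb $\varepsilon^{-1}$. The paper instead uses the \emph{integral representation} of $\nabla E^I$ (as in Stynes--Tobiska, Lemma 3.2): on each triangle the derivative of the linear interpolant is a difference quotient of nodal values, e.g. $(E^I)_x\vert_{K^1_{i,j}}=h_{x,i}^{-1}\int_{x_i}^{x_{i+1}}\partial_xE(t,y_j)\,\mathrm{d}t$, so no second derivatives appear at all. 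This immediately gives $|(E_1^I)_x|\le\Vert\partial_xE_1\Vert_{L^\infty}\le C\varepsilon^{-1}N^{-\rho}$ on $\Omega_y$, $|(E_2^I)_x|\le CN^{-\rho}$ on $\Omega_x$, $|\nabla E_{12}^I|\le C\varepsilon^{-1}N^{-\rho}$ on $\Omega_x\cup\Omega_y$, and, after integrating the exponential along each edge and summing the resulting geometric decay over the mesh columns, $\Vert\nabla E^I\Vert_{L^1(\Omega_s)}\le CN^{-\rho}$. You should replace your add-and-subtract step by this direct representation; with that change the rest of your bookkeeping is sound.
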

\begin{proof}
The proof of the first inequality is similar to the one of \cite[Theorem 4.2]{Styn1ORior2:1997-uniformly}.
The remained bounds rely on the integral   representation of $\nabla E^I$ and could be obtained directly. 
The reader is  referred to \cite[Lemma 3.2]{Styn1Tobi2:2003-SDFEM} for the basic ideas.
\end{proof}

\section{Supercloseness property}
In this section, we will estimate each term in 
$a_{SD}(u-u^I,v^N)$ to derive the bound of $\Vert u^I-u^N\Vert_{SD}$. First, we
estimate the diffusion part in the bilinear form.
\begin{lemma}\label{lem: uI-uN-I}
Let Assumption \ref{assumption-regularity} hold. We have
$$ \left| \varepsilon (\nabla (u-u^I), \nabla v^{N})\right|
\le
C  (\varepsilon^{1/2} N^{-1}+N^{-3/2})\ln^{3/2}N
\Vert v^{N} \Vert_{SD}.
$$
\end{lemma}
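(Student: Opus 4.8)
The plan is to split $u = S + E_1 + E_2 + E_{12}$ and estimate the contribution of each component to $\varepsilon(\nabla(u-u^I),\nabla v^N)$ separately, and within each contribution to split the domain into the four subregions $\Omega_s$, $\Omega_x$, $\Omega_y$, $\Omega_{xy}$. For each piece I will bound $\varepsilon|(\,(w-w^I)_x, v^N_x\,)_D|$ and the analogous $y$-term, and absorb the factors involving $v^N$ into $\Vert v^N\Vert_{SD}$ using either $\varepsilon^{1/2}\Vert v^N_x\Vert_D \le \Vert v^N\Vert_{SD}$ (Cauchy--Schwarz on $\Omega$), or, on $\Omega_s$ where $\delta_K = C^\ast N^{-1}$, the sharper $\Vert v^N_x\Vert_{L^1(\Omega_s)} \le |\Omega_s|^{1/2}\Vert v^N_x\Vert_{L^2(\Omega_s)} \le C (\delta_K^{-1/2})\,N^{-1/2}\cdots$ — more precisely $\Vert\b b\cdot\nabla v^N\Vert_{L^1(\Omega_s)} \le C\,\delta_K^{-1/2}\Vert v^N\Vert_{SD} = C N^{1/2}\Vert v^N\Vert_{SD}$ after using $|b_1|\ge\beta_1>0$. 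The whole point of Lemma \ref{lem:superconvergence-diffusion-term} is that on $\Omega_s$, where the mesh is coarse ($h_{x,i},h_{y,i}\sim N^{-1}$) and $\varepsilon$-independent derivative bounds hold, the pairing of $(w-w^I)_x$ with the \emph{constant} $v^N_x$ over the symmetric patch $\mathcal{Q}_{i,j}$ gains an extra power of $h$ compared to the crude $\Vert(w-w^I)_x\Vert\,\Vert v^N_x\Vert$ bound; one sums the patch estimates \eqref{eq:diffusion-term x}, \eqref{eq:diffusion-term y} over $\mathcal{Q}_{i,j}$, $\mathcal{S}_{i,j}$ covering $\Omega_s$.

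For the regular part $S$ on $\Omega_s$: using \eqref{eq:diffusion-term x} with $w=S$, $|\partial^{l+1}_x\partial^m_y S|\le C$, $h_{x,i},h_{y,j}\le 2N^{-1}$, summing over patches gives $\varepsilon\,|(\,(S-S^I)_x,v^N_x\,)_{\Omega_s}| \le C\varepsilon N^{-2}\Vert v^N_x\Vert_{L^1(\Omega_s)} \le C\varepsilon N^{-3/2}\Vert v^N\Vert_{SD}$, which is well inside the claimed bound; the $y$-term and the contribution of $S$ on $\Omega\setminus\Omega_s$ (where one uses Lemma \ref{lem: anisotropic interpolation-triangle} with $p=2$ and Cauchy--Schwarz, the small measure of the layer strips compensating) are handled the same way. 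For the layer parts the harder regions are where the layer is \emph{not} decaying, e.g. $E_1$ on $\Omega_y$ and $\Omega_s$ (where $e^{-\beta_1(1-x)/\varepsilon}\le N^{-\rho}$ is exponentially small and kills everything) versus $E_1$ on $\Omega_x\cup\Omega_{xy}$ (where the mesh is fine in $x$, $h_{x,i}\sim\varepsilon N^{-1}\ln N$, but $\partial_x$ costs $\varepsilon^{-1}$). On $\Omega_x$ a typical term is $\varepsilon\,\Vert(E_1-E_1^I)_x\Vert_{L^2(\Omega_x)}\Vert v^N_x\Vert_{L^2(\Omega_x)} \le \varepsilon^{1/2}\Vert v^N\Vert_{SD}\cdot\varepsilon^{1/2}\Vert(E_1-E_1^I)_x\Vert_{L^2(\Omega_x)}$, and Lemma \ref{lem: anisotropic interpolation-triangle} gives $\Vert(E_1-E_1^I)_x\Vert_{L^2(\Omega_x)}\le C(h_{x}\Vert\partial^2_x E_1\Vert + h_y\Vert\partial_x\partial_y E_1\Vert)_{L^2(\Omega_x)}$; using $\Vert e^{-\beta_1(1-x)/\varepsilon}\Vert_{L^2(\Omega_x)}\le C\varepsilon^{1/2}$ (and the $y$-extent of $\Omega_x$ being $O(1)$), $h_x\sim\varepsilon N^{-1}\ln N$, $h_y\sim N^{-1}$, one gets contributions of order $\varepsilon^{1/2}N^{-1}\ln N\cdot\varepsilon^{-1}\cdot\varepsilon^{1/2}\cdot\varepsilon^{1/2} = \varepsilon^{1/2}N^{-1}\ln N$ from the $\partial^2_x$ part, matching the first term $C\varepsilon^{1/2}N^{-1}\ln^{3/2}N$, and a smaller $N^{-2}$-type term from the mixed derivative. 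The genuinely delicate cross terms — $E_1$ differentiated in $y$ paired with $v^N_y$ on $\Omega_x$, and symmetrically $E_2$ in $x$ on $\Omega_y$ — and the corner term $E_{12}$ on $\Omega_{xy}$ (where $\Vert\nabla E_{12}\Vert_{L^2(\Omega_{xy})}\le C\varepsilon^{-1}\cdot\varepsilon = C$, measure $\sim(\varepsilon\ln N)^2$, giving $\varepsilon\cdot$(interpolation error)$\cdot\Vert v^N\Vert\lesssim\varepsilon^{1/2}\ln^2 N\cdot\varepsilon^{1/2}\cdots$) must all be checked to land within $C(\varepsilon^{1/2}N^{-1}+N^{-3/2})\ln^{3/2}N$.

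The main obstacle I anticipate is the bookkeeping on $\Omega_s$ for the layer components $E_1, E_2, E_{12}$ where one must \emph{not} throw away the exponential smallness too early: on $\Omega_s$ the factor $N^{-\rho}$ with $\rho=2.5$ is what produces terms like $N^{-\rho}$ which are dominated by $N^{-3/2}$, and this is precisely why $\rho$ must be chosen $\ge 5/2$; for $E_1$ on $\Omega_s$ I would apply \eqref{eq:diffusion-term x} and use the $\Vert\nabla E^I\Vert_{L^1(\Omega_s)}\le CN^{-\rho}$ bound from Lemma \ref{lem: linear interpolation} together with the crude estimate $\Vert(E_1-E_1^I)_x\Vert\le\Vert(E_1)_x\Vert+\Vert(E_1^I)_x\Vert$, rather than Taylor expansion, since the derivative bounds on $E_1$ are not $\varepsilon$-uniform. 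The second-hardest point is making sure the $\ln^{3/2}N$ (not $\ln^2 N$) power is actually attained — this requires using the $L^1$-over-$L^2$ trick on $\Omega_s$ and being slightly careful on $\Omega_x,\Omega_y$ about which norm of $v^N$ one pays for, exactly as the statement of Lemma \ref{lem:superconvergence-diffusion-term} is designed to allow. Everything else is routine Cauchy--Schwarz plus Lemma \ref{lem: anisotropic interpolation-triangle} and the mesh-size and exponential-decay estimates.
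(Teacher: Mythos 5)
Your overall splitting (solution decomposition plus the four subregions, crude smallness arguments for the layer parts on $\Omega_s$ using $\rho=5/2$, anisotropic interpolation elsewhere) matches the paper, but you have misplaced the one step that actually produces supercloseness, and an $\varepsilon$-power slip hides the resulting failure. The critical term is $\varepsilon\left((E_1-E_1^I)_x,v^N_x\right)_{\Omega_x}$: there $h_{x,i}\sim\varepsilon N^{-1}\ln N$ but $|\partial_x^2E_1|\le C\varepsilon^{-2}e^{-\beta_1(1-x)/\varepsilon}$, so $\Vert\partial_x^2E_1\Vert_{L^2(\Omega_x)}\le C\varepsilon^{-3/2}$, not the $\varepsilon^{-1}\cdot\varepsilon^{1/2}$ you used. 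Redoing your Cauchy--Schwarz computation with the correct power gives $\varepsilon^{1/2}\cdot h_{x}\cdot\varepsilon^{-3/2}\sim N^{-1}\ln N$; the remark immediately following this lemma in the paper records exactly this: the standard argument (whether $L^2$--$L^2$ or $L^\infty$--$L^1$, via Lemma \ref{lem: anisotropic interpolation-triangle}) yields only $CN^{-1}\ln^{3/2}N\,\Vert v^N\Vert_{SD}$, which is not within $C(\varepsilon^{1/2}N^{-1}+N^{-3/2})\ln^{3/2}N$ when $\varepsilon\ll N^{-1}$. So your treatment of $\Omega_x$ does not prove the lemma. Note also that the genuinely delicate pairing is $\partial_x(E_1-E_1^I)$ with $v^N_x$ on $\Omega_x$, not the $y$-derivative cross terms you single out: $\partial_yE_1$ carries no factor $\varepsilon^{-1}$ and is easy.

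Conversely, you spend Lemma \ref{lem:superconvergence-diffusion-term} where it is not needed: on $\Omega_s$ the plain bound $\Vert(S-S^I)_x\Vert_{L^\infty(\Omega_s)}\le CN^{-1}$ paired with $\Vert v^N_x\Vert_{L^1(\Omega_s)}\le C\varepsilon^{-1/2}\cdot\varepsilon^{1/2}\Vert v^N_x\Vert_{\Omega_s}$ already gives $C\varepsilon^{1/2}N^{-1}\Vert v^N\Vert_{SD}$ after multiplying by $\varepsilon$, and $\rho=5/2$ disposes of the layer parts there, as you correctly anticipate. The paper's proof instead applies Lemma \ref{lem:superconvergence-diffusion-term} precisely to $E_1$ on $\Omega_x$ (this is the point of the remark after that lemma): cover the interior of $\Omega_x$ by patches $\mathcal{Q}_{i,j}=K^2_{i,j-1}\cup K^1_{i,j}$, on each of which $v^N_x$ is constant, so Taylor cancellation yields the per-patch factor $C\sum_{l+m=2}h_{x,i}^lh_{y,j}^m\Vert\partial_x^{l+1}\partial_y^mE_1\Vert_{L^\infty}\le C\varepsilon^{-1}N^{-2}\ln^2N$ and, after summation and multiplication by $\varepsilon$, a contribution $CN^{-2}\ln^{5/2}N\,\Vert v^N\Vert_{SD}$; the leftover single rows of triangles are handled by $v^N_x=0$ on the bottom row (homogeneous boundary condition) and by the small measure $C\varepsilon N^{-1}\ln N$ of the top strip, which is what produces the $N^{-3/2}\ln^{3/2}N$ term in the statement and explains the limitation to order $3/2$ (cf. Remark \ref{remark:triangle and rectangle}). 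Without this patchwise cancellation in $\Omega\setminus\Omega_s$ your argument cannot reach the claimed bound.
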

\begin{proof}
We only present the estimates of $\varepsilon ((u-u^I)_{x}, v^{N}_{x})$, for ones of $\varepsilon ((u-u^I)_{y}, v^{N}_{y})$ are similar.

Using the decomposition \eqref{eq:(2.1a)}, for all $v^{N}\in V^{N}$ we have
\begin{equation*}
((u-u^I)_{x}, v^{N}_{x})=\mr{I}+\mr{II}+\mr{III}
\end{equation*}
where
\begin{align*}
\mr{I}=&
\left(  (S-S^I)_{x}, v^{N}_{x}\right)_{\Omega_{s} }+\left(  (S-S^I)_{x}, v^{N}_{x}\right)_{\Omega\setminus\Omega_{s} }
+\left(  (E_{2}-E^I_{2})_{x}, v^{N}_{x}\right)_{\Omega_{y} }\\
&+\left(  (E-E^I)_{x}, v^{N}_{x}\right)_{\Omega_{xy} },
\\
\mr{II}= &\left(  (E-E^I)_{x}, v^{N}_{x}\right)_{\Omega_{s} } 
+\left(  (E_{1}-E^I_{1})_{x}, v^{N}_{x}\right)_{\Omega_{y} }
+\left(  (E_{2}-E^I_{2})_{x}, v^{N}_{x}\right)_{\Omega_{x} }\\
&+\left( (E_{12}-E_{12}^I)_x,v^N_x\right)_{\Omega_x\cup\Omega_y},\\
\mr{III}=&\left(  (E_{1}-E^I_{1})_{x}, v^{N}_{x}\right)_{\Omega_{x} }.
\end{align*}
In the following, we will estimate them term by term.

\noindent
\textbf{Analysis of $\mr{I}$}:\\
The analysis in this part depends on anisotropic interpolation estimates, i.e.,  Lemma \ref{lem: anisotropic interpolation-triangle}.
\begin{align}
|\left(  (S-S^I)_{x}, v^{N}_{x}\right)_{\Omega_{s} }|
\le & C \Vert  (S-S^I)_{x} \Vert_{L^{\infty}(\Omega_{s})} \Vert v^{N}_{x} \Vert_{L^{1}(\Omega_{s})}
\label{eq: diffusion-I-1}\\
\le & C \varepsilon ^{-1/2}N^{-1}\cdot \varepsilon^{1/2} \Vert v^{N}_{x} \Vert_{\Omega_{s}}\nonumber\\
\le & C\varepsilon ^{-1/2}N^{-1} \Vert v^{N} \Vert_{SD}.
\nonumber
\end{align}
\begin{align}
|\left(  (E_{2}-E^I_{2})_{x}, v^{N}_{x}\right)_{\Omega_{y} }|
\le & C \Vert   (E_{2}-E^I_{2})_{x} \Vert_{L^{\infty}(\Omega_{y})} \Vert v^{N}_{x} \Vert_{L^{1}(\Omega_{y})}
\label{eq: diffusion-I-2}\\
\le & CN^{-1}\ln N \cdot (\varepsilon\ln N)^{1/2} \Vert v^{N}_{x} \Vert_{\Omega_{y}}\nonumber\\
\le & CN^{-1}\ln^{3/2} N  \Vert v^{N} \Vert_{SD}.\nonumber
\end{align}
Similarly, we have 
\begin{align}
|\left(  (S-S^I)_{x}, v^{N}_{x}\right)_{\Omega\setminus\Omega_{s} }|
\le& CN^{-1} \ln^{1/2}N\Vert v^{N} \Vert_{SD}\label{eq: diffusion-I-2}\\
|\left(  (E-E^I)_{x}, v^{N}_{x}\right)_{\Omega_{xy} }|
\le
&C\varepsilon^{-1/2}N^{-1}\ln^{3/2}N\Vert v^{N} \Vert_{SD}
\label{eq: diffusion-I-4}.
\end{align}
Combining \eqref{eq: diffusion-I-1}---\eqref{eq: diffusion-I-4}, we obtain
\begin{equation}\label{eq: diffusion-I}
|\mr{I}|\le C\varepsilon^{-1/2} N^{-1} \ln^{3/2}N\Vert v^{N} \Vert_{SD}.
\end{equation}
\noindent
\textbf{Analysis of $\mr{II}$}:\\
The analysis in this part depends on the smallness of layer functions or/and $\mr{meas}(\Omega\setminus\Omega_s)$.
\begin{align}
\left\vert \left(  (E-E^I)_{x}, v^{N}_{x}\right)_{\Omega_{s} } \right\vert
\le& \left( \Vert E_{x}\Vert_{L^{1}(\Omega_{s})} + \Vert E^I_{x}\Vert_{L^{1}(\Omega_{s})}\right) 
\Vert v^{N}_{x} \Vert_{L^{\infty}(\Omega_{s})}
\label{eq: diffusion-II-1}\\
\le &C   N^{-\rho}\cdot  N\Vert v^{N}_{x} \Vert_{\Omega_{s}}\nonumber\\
\le &C  \varepsilon^{-1/2} N^{1-\rho}\Vert v^{N}\Vert_{SD} \nonumber
\end{align}
where we have used inverse estimates \cite[Theorem 3.2.6]{Ciarlet:1978-finite}.
\begin{align}
\left\vert \left(  (E_{1}-E^I_{1})_{x}, v^{N}_{x}\right)_{\Omega_{y} } \right\vert
\le & \left( \Vert (E_{1})_{x}\Vert_{L^{\infty}(\Omega_{y})} + \Vert (E^I_{1})_{x}\Vert_{L^{\infty}(\Omega_{y})}\right) 
\Vert v^{N}_{x} \Vert_{L^{1}(\Omega_{y})}
\label{eq: diffusion-II-2}\\
\le &C  \varepsilon ^{-1}N^{-\rho}\cdot (\varepsilon\ln N)^{1/2}\Vert v^{N}_{x} \Vert_{\Omega_{y}}\nonumber\\
\le & C\varepsilon^{-1}N^{-\rho}\ln^{1/2} N \Vert v^{N} \Vert_{SD}.\nonumber
\end{align}
Similarly,  we have
\begin{align}
&\left\vert \left(  (E_{2}-E^I_{2})_{x}, v^{N}_{x}\right)_{\Omega_{x} } \right\vert
\le CN^{-\rho}\ln^{1/2} N \Vert v^{N} \Vert_{SD},
\label{eq: diffusion-II-3}\\
&\left\vert \left( (E_{12}-E_{12}^I)_x,v^N_x\right)_{\Omega_x\cup\Omega_y} \right\vert
\le 
C\varepsilon^{-1}N^{-\rho}\ln^{1/2} N \Vert v^{N} \Vert_{SD}.
\label{eq: diffusion-II-4}
\end{align}
Estimates \eqref{eq: diffusion-II-1}--\eqref{eq: diffusion-II-4} yield
\begin{equation}\label{eq: diffusion-II}
|\mr{II}|\le C  (\varepsilon^{-1/2} N^{1-\rho}+
\varepsilon^{-1} N^{-\rho}\ln^{1/2}N)
\Vert v^{N}\Vert_{SD}. 
\end{equation}

\noindent
\textbf{Analysis of $\mr{III}$}:\\
 Estimation of $\mr{III}$ depends on Lemma \ref{lem:superconvergence-diffusion-term}. First, we decompose it as follows:
\begin{align*}
& \left( (E_{1}-E^I_{1})_{x},v^{N}_{x} \right)_{\Omega_{x} }
=\sum_{i=N/2}^{N-1}\sum_{j=0}^{N/2-1}\sum_{m=1}^{2}
\left( (E_{1}-E^I_{1})_{x},v^{N}_{x} \right)_{ K_{i,j}^{m} }\\
&=\sum_{i=N/2}^{N-1}
\left( (E_{1}-E^I_{1})_{x},v^{N}_{x} \right)_{ K_{i,0}^{1} }
+\sum_{i=N/2}^{N-1}\left( (E_{1}-E^I_{1})_{x},v^{N}_{x} \right)_{ K_{i,N/2-1}^{2} }\\
&\;\;\;\;+\sum_{i=N/2+1}^{N-1}\sum_{j=1}^{N/2-1}
\left( (E_{1}-E^I_{1})_{x},v^{N}_{x} \right)_{ K_{i,j-1}^{2}\cup K_{i,j}^{1} }\\
&:=\mr{T}_{1}+\mr{T}_{2}+\mr{T}_{3}.
\end{align*}
Recalling $v^{N}|_{\partial\Omega}=0$, we have $v^{N}_{x}|_{K^{1}_{i,0}}=0$, $i=0,\ldots, N-1$ and then
\begin{equation}\label{eq: diffusion-III-T1}
|\mr{T}_{1}|=0.
\end{equation}
Analysis of $\mr{T}_{2}$ is as follows: 
\begin{align}
|\mr{T}_{2}|
&\le   \Vert (E_{1}-E^I_{1})_{x} \Vert_{L^{\infty}(\Omega_{x,r})}
\Vert  v^{N}_{x}  \Vert_{L^{1}(\Omega_{x,r})}
\label{eq: diffusion-III-T2}\\
&\le C\varepsilon ^{-1} N^{-1}\ln N \cdot\left(\varepsilon N^{-1}\ln N\right)^{1/2} \Vert  v^{N}_{x}  \Vert_{\Omega_{x,r}}\nonumber\\
&\le   C \varepsilon ^{-1} N^{-3/2}\ln ^{3/2} N\Vert v^{N} \Vert_{SD}\nonumber
\end{align}
where $\Omega_{x,r}=\bigcup_{i=N/2}^{N-1}K_{i,N/2-1}^{2}$,
$\mr{meas}(\Omega_{x,r})\le C\varepsilon N^{-1}\ln N$ and we have used  Lemma \ref{lem: anisotropic interpolation-triangle} for $\Vert (E_{1}-E^I_{1})_{x} \Vert_{L^{\infty}(\Omega_{x,r})}$.

Using Lemma \ref{lem:superconvergence-diffusion-term}, we obtain
\begin{align}
|\mr{T}_{3}|\le &C \sum_{i=N/2+1}^{N-1}\sum_{j=1}^{N/2-1} \varepsilon^{-1} N^{-2}\ln^{2}N 
\Vert v^{N}_{x} \Vert_{L^{1}(K_{i,j-1}^{2}\cup K_{i,j}^{1})}
\label{eq: diffusion-III-T3}\\
&\le  C\varepsilon^{-1} N^{-2}\ln^{2}N \Vert v^{N}_{x} \Vert_{L^{1}(\Omega_{x})}\nonumber\\
&\le C\varepsilon^{-1} N^{-2}\ln^{2}N (\varepsilon\ln N)^{1/2}\Vert v^{N}_{x} \Vert_{\Omega_{x}}\nonumber\\
&\le C\varepsilon^{-1} N^{-2}\ln^{5/2}N  \Vert v^{N} \Vert_{SD}.\nonumber
\end{align}
Combining \eqref{eq: diffusion-III-T1}---\eqref{eq: diffusion-III-T3}, we obtain
\begin{equation}\label{eq: diffusion-III}
|\mr{III}|\le C \varepsilon ^{-1} N^{-3/2}\ln ^{3/2} N\Vert v^{N} \Vert_{SD}.
\end{equation}
Collecting \eqref{eq: diffusion-I}, \eqref{eq: diffusion-II} and \eqref{eq: diffusion-III},  we are done.
\end{proof}
\begin{remark}
If we make use of  standard arguments, then we have
\begin{align*}
|\left( (E_{1}-E^I_{1})_{x},v^{N}_{x} \right)_{\Omega_{x} }|
&\le 
\Vert (E_{1}-E^I_{1})_{x} \Vert_{L^{\infty}(\Omega_x)}
\Vert v^{N}_{x} \Vert_{L^{1}(\Omega_x)}\\
&\le
C\varepsilon^{-1}N^{-1}\ln N\cdot (\varepsilon\ln N)^{1/2} 
\Vert v^{N}_{x} \Vert_{ \Omega_x }\\
&\le
C\varepsilon^{-1}N^{-1}\ln^{3/2}N \Vert v^N \Vert_{SD},
\end{align*}
where we have used Lemma \ref{lem: anisotropic interpolation-triangle}.  Thus, we only obtain
$$
\varepsilon|(\nabla(u-u^I), \nabla v^N)|\le CN^{-1}\ln^{3/2}N \Vert v^N \Vert_{SD}.
$$

\end{remark}

Next, we analyze the remained in the bilinear form $a_{SD}(u-u^I,v^N)$.
\begin{lemma}\label{lem: uI-uN-II}
Let Assumption \ref{assumption-regularity} hold true. We have
\begin{align}
&\left| 
(\b{b}\cdot \nabla(u-u^{ I }), v^{N} )
+(c(u-u^{I }), v^{N})\right|
\le CN^{-3/2}
\Vert v^{N} \Vert_{SD},
\label{eq:CR term in uI-uN}\\
&\left| 
a_{stab}(u-u^{I }, v^{N})
\right|
\le C N^{-1/2}(\varepsilon+N^{-1})\Vert v^{N} \Vert_{SD}.
\label{eq:stabilization term in uI-uN}
\end{align}
\end{lemma}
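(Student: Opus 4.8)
\textbf{Proof plan for Lemma \ref{lem: uI-uN-II}.}
The plan is to treat the two estimates separately, in each case splitting the integral over $\Omega$ into contributions from $\Omega_s$ and from the layer region $\Omega\setminus\Omega_s=\Omega_x\cup\Omega_y\cup\Omega_{xy}$, and then inserting the decomposition $u=S+E_1+E_2+E_{12}$ from Assumption \ref{assumption-regularity}. For the convection--reaction term \eqref{eq:CR term in uI-uN}, the key observation is that $\b{b}\cdot\nabla(u-u^I)+c(u-u^I)$ can be bounded pointwise by $\Vert u-u^I\Vert$-type quantities together with $\Vert\nabla(u-u^I)\Vert$-type quantities, so I would first estimate $(c(u-u^I),v^N)$ by Cauchy--Schwarz, using Lemma \ref{lem: linear interpolation} (first inequality) on $\Omega_s$ to get $CN^{-2}\Vert v^N\Vert\le CN^{-2}\Vert v^N\Vert_{SD}$, and on the layer subdomains using the $L^2$ interpolation bound from Lemma \ref{lem: anisotropic interpolation-triangle} applied to each layer component, exploiting that $\mr{meas}$ of each layer strip is $O(\varepsilon\ln N)$ or $O(\varepsilon^2\ln^2N)$ to absorb the negative powers of $\varepsilon$ coming from the derivatives in Assumption \ref{assumption-regularity}. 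The term $(\b{b}\cdot\nabla(u-u^I),v^N)$ is handled the same way componentwise, again balancing the $\varepsilon^{-1}$ or $\varepsilon^{-2}$ weights in the anisotropic bounds against the small measure of the layer regions; on $\Omega_s$ the regular part $S$ gives $O(N^{-2})$ and the layer parts $E_i$ give $O(N^{-\rho})$ contributions (cf.\ Lemma \ref{lem: linear interpolation}), all dominated by $N^{-3/2}$ since $\rho=2.5$.

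For the stabilization term \eqref{eq:stabilization term in uI-uN}, I would first use that $\delta_K$ is supported only on $\Omega_s$ (by \eqref{eq: delta-K}) and that $\Delta(u-u^I)=-\Delta u$ on each $K$ since $u^I$ is piecewise linear, so that
\[
a_{stab}(u-u^I,v^N)=\sum_{K\subset\Omega_s}\bigl(-\varepsilon\Delta u+\b{b}\cdot\nabla(u-u^I)+c(u-u^I),\,\delta_K\b{b}\cdot\nabla v^N\bigr)_K .
\]
Each of the three pieces is then estimated by Cauchy--Schwarz on $\Omega_s$, factoring out $\delta_K^{1/2}=(C^\ast N^{-1})^{1/2}$ so that $\Vert\delta_K^{1/2}\b{b}\cdot\nabla v^N\Vert_{\Omega_s}\le\Vert v^N\Vert_{SD}$. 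The first piece contributes $\varepsilon N^{-1/2}\Vert\Delta u\Vert_{\Omega_s}\le C\varepsilon N^{-1/2}\Vert v^N\Vert_{SD}$ using the $|\partial^{i+j}S|\le C$ bound and the exponential smallness of the layer second derivatives on $\Omega_s$; the reaction piece gives $N^{-1/2}\Vert c(u-u^I)\Vert_{\Omega_s}\le CN^{-1/2}\cdot N^{-2}\Vert v^N\Vert_{SD}$; and the convection piece gives $N^{-1/2}\Vert\b{b}\cdot\nabla(u-u^I)\Vert_{\Omega_s}$, which by Lemma \ref{lem: anisotropic interpolation-triangle} on $\Omega_s$ (where $h_{x,i},h_{y,i}\le 2N^{-1}$) is at most $CN^{-1/2}\cdot N^{-1}\Vert v^N\Vert_{SD}$ for the regular part and $CN^{-1/2}N^{-\rho}\Vert v^N\Vert_{SD}$ for the layer parts. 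Collecting the three pieces yields the bound $CN^{-1/2}(\varepsilon+N^{-1})\Vert v^N\Vert_{SD}$.

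I expect the main obstacle to be the bookkeeping in \eqref{eq:CR term in uI-uN} on the layer subdomains: one must be careful that, for instance, $(E_1-E_1^I)_x$ on $\Omega_x$ — where the mesh is fine in $x$ but coarse in $y$ — is controlled using the anisotropic bound so that the dangerous factor $\varepsilon^{-1}$ multiplies $h_{x,i}=O(\varepsilon N^{-1}\ln N)$ rather than $h_{y,i}=O(N^{-1})$, and likewise that cross terms like $(E_{12}-E_{12}^I)$ on $\Omega_x\cup\Omega_y$ are split so each factor of $\varepsilon^{-1}$ is paired with a small mesh width or a small strip measure. Once the right pairing is chosen in each subdomain the estimates are routine, and since $\rho=2.5$ every layer contribution is $o(N^{-3/2})$, so the regular-part term on $\Omega_s$, of size $N^{-2}$ from Lemma \ref{lem: linear interpolation}, together with the $\Omega_y$/$\Omega_x$ contributions of the convection term, determines the final $N^{-3/2}$ rate. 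The second estimate is less delicate because $\delta_K$ vanishes outside $\Omega_s$, where no anisotropy or layer subtlety arises.
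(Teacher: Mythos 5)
Your plan for \eqref{eq:CR term in uI-uN} has a genuine gap: you keep the derivative on the interpolation error and try to bound $(\b{b}\cdot\nabla(u-u^I),v^N)$ directly by Cauchy--Schwarz. This cannot reach $N^{-3/2}$ uniformly in $\varepsilon$. On $\Omega_s$ the regular part only satisfies $\Vert\nabla(S-S^I)\Vert_{L^\infty(\Omega_s)}\le CN^{-1}$, so pairing with $\Vert v^N\Vert$ gives $O(N^{-1})$, not $O(N^{-3/2})$; the $O(N^{-2})$ figure you quote from Lemma \ref{lem: linear interpolation} is a bound on $u-u^I$ itself, not on its gradient, and you can only exploit it after the derivative has been moved off $u-u^I$. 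In the layer regions the situation is worse: e.g.\ $\Vert(E_1-E_1^I)_x\Vert_{L^2(\Omega_x)}\sim \varepsilon^{-1/2}N^{-1}\ln N$ and $\Vert(E_1-E_1^I)_x\Vert_{L^\infty(\Omega_x)}\sim\varepsilon^{-1}N^{-1}\ln N$, while on $\Omega\setminus\Omega_s$ the SD norm only controls $\Vert v^N\Vert$ and $\varepsilon^{1/2}\vert v^N\vert_1$ (there is no $\delta_K$ weight there and no derivative on $v^N$ in this term), so no pairing of the "small measure against $\varepsilon^{-1}$" type removes the negative power of $\varepsilon$. The paper's proof avoids all of this by integrating by parts: $(\b{b}\cdot\nabla(u-u^I),v^N)=-(u-u^I,\b{b}\cdot\nabla v^N)-((\nabla\cdot\b{b})(u-u^I),v^N)$, then on $\Omega_s$ it combines $\Vert u-u^I\Vert_{\Omega_s}\le CN^{-2}$ with $\Vert\b{b}\cdot\nabla v^N\Vert_{\Omega_s}\le C\delta^{-1/2}\Vert v^N\Vert_{SD}=CN^{1/2}\Vert v^N\Vert_{SD}$ (this is exactly where the $N^{-3/2}$ comes from), and on $\Omega\setminus\Omega_s$ it pairs $\Vert u-u^I\Vert_{L^\infty}\le CN^{-2}\ln^2N$ with $\Vert\b{b}\cdot\nabla v^N\Vert_{L^1(\Omega\setminus\Omega_s)}\le C\ln^{1/2}N\Vert v^N\Vert_{SD}$, using the small measure together with $\varepsilon^{1/2}\vert v^N\vert_1\le\Vert v^N\Vert_{SD}$. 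Without this integration by parts the first estimate does not go through.

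Your treatment of \eqref{eq:stabilization term in uI-uN} is structurally the same as the paper's (restriction to $\Omega_s$ via $\delta_K$, then termwise estimates), but one detail is wrong as stated: $\Vert\Delta u\Vert_{L^2(\Omega_s)}$ is not $O(1)$, because the layer parts contribute, e.g., $\Vert\partial_x^2E_1\Vert_{L^2(\Omega_s)}\sim\varepsilon^{-3/2}N^{-\rho}$, and $\varepsilon N^{-1/2}\cdot\varepsilon^{-1/2}N^{-\rho}$ is not uniformly bounded by $N^{-1/2}(\varepsilon+N^{-1})$ for small $\varepsilon$. The paper avoids this by splitting $u-u^I=(S-S^I)+(E-E^I)$ first: the $S$-part is estimated with $L^\infty$ bounds ($\Vert\Delta S\Vert_{L^\infty}\le C$, giving the $\varepsilon$-term), while for the $E$-part it uses $\varepsilon\Vert\Delta E\Vert_{L^1(\Omega_s)}\le CN^{-\rho}$ (an $L^1$--$L^\infty$ pairing with $\Vert\b{b}\cdot\nabla v^N\Vert_{L^\infty(\Omega_s)}\le CN^{3/2}\Vert v^N\Vert_{SD}$ via an inverse estimate), which yields the harmless $N^{1/2-\rho}$ contribution. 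With that adjustment your second estimate is fine; the first one needs the integration-by-parts idea to be salvaged.
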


\begin{proof}
%
Integrations by part  and Lemma \ref{lem: linear interpolation} give
\begin{align*}
&\left| (\b{b}\cdot \nabla(u-u^{ I }), v^{N} )
+(c(u-u^{I }), v^{N}) \right|\\
\le&
| (u-u^{I},\b{b}\cdot  \nabla v^{N} )_{\Omega_{s}} |
+
| (u-u^{I},\b{b}\cdot  \nabla v^{N} )_{\Omega\setminus\Omega_{s}} |
+|((c-\nabla\cdot\b{b})(u-u^{I }), v^{N})| \\
\le &
C\Vert  u-u^{I} \Vert_{\Omega_{s}} \Vert  \b{b}\cdot  \nabla v^{N} \Vert_{\Omega_{s}}
+C\Vert  u-u^{I} \Vert_{L^{\infty}(\Omega\setminus \Omega_{s}) } \Vert  \b{b}\cdot  \nabla v^{N} \Vert_{L^{1}(\Omega\setminus \Omega_{s})}+C \Vert  u-u^{I } \Vert \Vert v^{N} \Vert \\
\le&
C(N^{-3/2}+ N^{-2}\ln^{5/2}N+N^{-2}\ln^{2} N)
\Vert v^{N} \Vert_{SD}.
\end{align*}
Thus, \eqref{eq:CR term in uI-uN} is obtained.

Analysis of \eqref{eq:stabilization term in uI-uN} is direct and we  can refer to \cite[Lemma 4.4]{Styn1Tobi2:2003-SDFEM} for detailed analysis.
Recalling $\delta_{K}=0$ if $K\subset\Omega\setminus \Omega_{s}$ and the decomposition \eqref{eq:(2.1a)}, we have
\begin{align}
|a_{stab}(S-S^{I},v^{N})|
&\le 
CN^{-1}\left( \varepsilon+N^{-1}+N^{-2}\right) 
\Vert \b{b}\cdot \nabla v^{N} \Vert_{L^{1}(\Omega_{s})}
\label{eq:a-stab S}\\
&\le
C N^{-1/2}(\varepsilon+N^{-1})\Vert v^{N} \Vert_{SD}
\nonumber
\end{align}
and
\begin{align}
|a_{stab}(E-E^{I},v^{N})|
\le 
CN^{-1} N^{-\rho} \Vert \b{b}\cdot \nabla v^{N} \Vert_{L^{\infty}(\Omega_{s})}
\le
C N^{1/2-\rho}\Vert v^{N} \Vert_{SD}
\label{eq:a-stab E}
\end{align}
where $E=E_1+E_2+E_{12}$. 
Combing \eqref{eq:a-stab S} and \eqref{eq:a-stab E}, we obtain \eqref{eq:stabilization term in uI-uN}.
\end{proof}

Now, we are in a position to state our main result.
\begin{theorem}\label{eq: uI-uN varepsilon}
Let Assumption \ref{assumption-regularity} hold true. We have
$$
\Vert u^I-u^{N} \Vert_{\varepsilon}
\le
\Vert u^I-u^{N} \Vert_{SD}
\le
C  (\varepsilon^{1/2} N^{-1}+N^{-3/2})\ln^{3/2}N. $$
\end{theorem}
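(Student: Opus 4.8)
The plan is to combine the coercivity of $a_{SD}(\cdot,\cdot)$ on $V^N$ with the Galerkin orthogonality and the two preceding lemmas. Set $\chi := u^I - u^N \in V^N$. Since the estimate is stated in the SD norm and $\Vert\cdot\Vert_\varepsilon \le \Vert\cdot\Vert_{SD}$ is immediate from the definitions \eqref{eq:energy norm}--\eqref{eq:SD norm}, it suffices to bound $\Vert\chi\Vert_{SD}$. By the coercivity \eqref{eq:SD coercivity},
\begin{equation*}
\frac{1}{2}\Vert\chi\Vert_{SD}^2 \le a_{SD}(\chi,\chi) = a_{SD}(u^I-u^N,\chi).
\end{equation*}

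Next I would insert $u$ via the orthogonality \eqref{eq:orthogonality of SD}, which gives $a_{SD}(u-u^N,\chi)=0$, hence
\begin{equation*}
a_{SD}(u^I-u^N,\chi) = a_{SD}(u^I-u,\chi) = -\,a_{SD}(u-u^I,\chi).
\end{equation*}
Now expand $a_{SD}(u-u^I,\chi) = a_{Gal}(u-u^I,\chi) + a_{stab}(u-u^I,\chi)$ and further split $a_{Gal}$ into the diffusion part $\varepsilon(\nabla(u-u^I),\nabla\chi)$ and the convection-reaction part $(\b{b}\cdot\nabla(u-u^I),\chi)+(c(u-u^I),\chi)$. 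The diffusion part is controlled by Lemma \ref{lem: uI-uN-I} by $C(\varepsilon^{1/2}N^{-1}+N^{-3/2})\ln^{3/2}N\,\Vert\chi\Vert_{SD}$; the convection-reaction part is controlled by \eqref{eq:CR term in uI-uN} in Lemma \ref{lem: uI-uN-II} by $CN^{-3/2}\Vert\chi\Vert_{SD}$; and the stabilization part is controlled by \eqref{eq:stabilization term in uI-uN} by $CN^{-1/2}(\varepsilon+N^{-1})\Vert\chi\Vert_{SD}$. Adding these three bounds, and noting that $N^{-1/2}(\varepsilon+N^{-1}) = \varepsilon N^{-1/2}+N^{-3/2}$ is dominated by $(\varepsilon^{1/2}N^{-1}+N^{-3/2})\ln^{3/2}N$ (since $\varepsilon\le N^{-1}$ forces $\varepsilon N^{-1/2}\le \varepsilon^{1/2}N^{-1/2}\cdot\varepsilon^{1/2}\le \varepsilon^{1/2}N^{-1}$), we get
\begin{equation*}
\frac{1}{2}\Vert\chi\Vert_{SD}^2 \le C(\varepsilon^{1/2}N^{-1}+N^{-3/2})\ln^{3/2}N\,\Vert\chi\Vert_{SD}.
\end{equation*}
Dividing by $\Vert\chi\Vert_{SD}$ (trivial if it is zero) yields the claimed bound.

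This argument is essentially routine once the two lemmas are in hand; there is no real obstacle at the level of the theorem itself. The only point requiring a moment's care is the bookkeeping of the various $\varepsilon$- and $\ln N$-powers to confirm that the stabilization contribution and the residual $N^{1-\rho}$-type terms appearing inside the proofs of Lemmas \ref{lem: uI-uN-I} and \ref{lem: uI-uN-II} (recall $\rho=2.5$) are all absorbed into $C(\varepsilon^{1/2}N^{-1}+N^{-3/2})\ln^{3/2}N$; using $\varepsilon\le N^{-1}$ this is straightforward. All the genuine difficulty has already been spent in establishing Lemma \ref{lem:superconvergence-diffusion-term} and, through it, the sharp diffusion estimate of Lemma \ref{lem: uI-uN-I}.
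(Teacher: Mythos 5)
Your proposal is correct and follows essentially the same route as the paper: coercivity \eqref{eq:SD coercivity} plus orthogonality \eqref{eq:orthogonality of SD} reduce everything to $a_{SD}(u^I-u,\chi)$ with $\chi=u^I-u^N$, which is then bounded by taking $v^N=\chi$ in Lemmas \ref{lem: uI-uN-I} and \ref{lem: uI-uN-II}. Your explicit check that $\varepsilon N^{-1/2}\le\varepsilon^{1/2}N^{-1}$ under $\varepsilon\le N^{-1}$ is the same bookkeeping the paper leaves implicit.
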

\begin{proof}
Considering \eqref{eq:energy norm}, \eqref{eq:SD norm} and coercivity \eqref{eq:SD coercivity} and orthogonality \eqref{eq:orthogonality of SD} of $a_{SD}(\cdot, \cdot)$, we have
\begin{align*}
\frac{1}{2}\Vert  u^{I}-u^{N} \Vert^{2}_{SD}
&\le
a_{SD}(u^{I}-u^N, u^{I}-u^{N})\\
&=a_{SD}(u^{I}-u, u^{I}-u^{N})+a_{SD}(u-u^N, u^{I}-u^{N})\\
&=a_{SD}(u^{I}-u, u^{I}-u^{N})
\end{align*}

Taking $v^N=u^I-u^N$ in Lemma \ref{lem: uI-uN-I} and \ref{lem: uI-uN-II},  we
have
$$
\frac{1}{2}\Vert  u^{I}-u^{N} \Vert^{2}_{SD}
\le 
C  (\varepsilon^{1/2} N^{-1}+N^{-3/2})\ln^{3/2}N\Vert  u^{I}-u^{N} \Vert_{SD}.
$$
Considering $\Vert  u^{I}-u^{N} \Vert_{\varepsilon} 
\le  \Vert  u^{I}-u^{N} \Vert_{SD} $, we are done.
\end{proof}
\begin{remark}\label{remark:triangle and rectangle}
 The  convergence order of $\Vert u^I-u^N \Vert_{\varepsilon}$ is only $3/2$, which also could be observed in our numerical tests (see Table \ref{table: alex-1}) and is different from $2$ order convergence in the case of SDFEM on Shishkin rectangular meshes (see \cite[Theorem 4.5]{Styn1Tobi2:2003-SDFEM}). 
\end{remark}
\begin{remark}
Using Lemma \ref{lem: anisotropic interpolation-triangle}, we have
$$
\Vert u-u^I \Vert_{\varepsilon}\le C N^{-1}\ln N.
$$
Thus we can see the supercloseness property clearly.  Considering Theorem
\ref{eq: uI-uN varepsilon},  we obtain
$$
\Vert u-u^N \Vert_{\varepsilon}\le \Vert u-u^I \Vert_{\varepsilon}+\Vert u^I-u^N \Vert_{\varepsilon}\le C N^{-1}\ln N.
$$

\end{remark}

%

%
%
\section{Errors of postprocessing solution}\label{sec: postprocess}
In this section, we will analyze the uniform estimate of $\Vert  u-\tilde{u}^{N} \Vert_{\varepsilon}$ where $\tilde{u}^{N}$ is the new numerical
solution obtained by applying to $u^{N}$ a local postprocessing technique. 
The procedure of postprocessing is similar to one in \cite[Section 5.2]{Styn1Tobi2:2003-SDFEM}.

Consider a family of Shishkin meshes $\mathcal{T}_{N}$ with mesh points $(x_{i},y_{j})$ for $i, j =
0, \ldots, N$, where we require $N/2$ to be even. Then we can build a coarser mesh
composed of disjoint macrotriangles $M$, each comprising four mesh triangles from
$\mathcal{T}_{N}$, where $M$ belongs to only one of the four domains $\Omega_{s}$, $\Omega_{x}$, $\Omega_{y}$, and $\Omega_{xy}$. Associate with each macrotriangle $M$ an interpolation operator $\mathcal{P}_{M}: C(M)\rightarrow P_{2}(M)$ defined by the standard quadratic interpolation at the nodes, and midpoints of edges of the macrotriangle, where $P_{2}(M)$ consists of  polynomials of degree 2 in two variables. As usual, $\mathcal{P}_{M}$ can be extended
to a continuous global interpolation operator $\mathcal{P}: C(\bar{\Omega})\rightarrow W^{N}$, where $W^{N}$ is the space of piecewise quadratic finite elements, by setting
\begin{equation*}
(\mathcal{P}v)\vert_{M}:=\mathcal{P}_{M}(v\vert_{M}),\quad \forall M.
\end{equation*}
For convenience we define $\tilde{v}:=Pv$. Note that the macrotriangle $M$ does not belong to $\mathcal{T}_{N/2}$
 because the transition point values $1-\lambda_{x}$ and $1-\lambda_{y}$ associated with the Shishkin mesh $\mathcal{T}_{N}$ change when $N$ is replaced by $N/2$.  We shall use the notation $\tilde{\mathcal{T}}_{N/2}$ (see Fig. \ref{fig:macroelement})
for the family of macromeshes
that is generated by the family of Shishkin meshes
$\mathcal{T}_{N}$. Thus each macrotriangle
$M\in \tilde{\mathcal{T}}_{N/2}$ is the union of four triangles from $\mathcal{T}_{N}$.
\begin{figure}[h]
\centering
\includegraphics[width=0.6\textwidth]{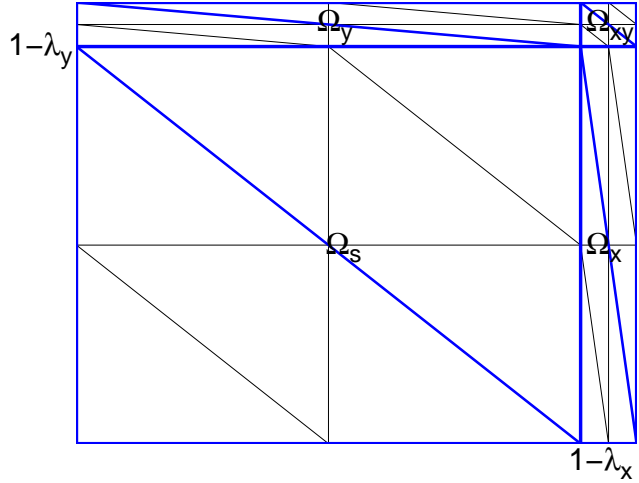}
\caption{Triangulation $\tilde{\mathcal{T}}_{N/2}$.}
\label{fig:macroelement}
\end{figure}

\begin{lemma}\label{lem:properties of P}
The interpolation operator $\mathcal{P}$ has the following properties:
\begin{align*}
&\mathcal{P}(v^I)=P(v),&&\forall v\in C(\bar{\Omega}),\\
&\Vert \mathcal{P} v^{N} \Vert_{\varepsilon}\le  C\Vert  v^{N} \Vert_{\varepsilon},\quad &&\forall v^{N}\in V^{N},\\
&\Vert \mathcal{P} v^{N} \Vert_{SD}\le  C\Vert  v^{N} \Vert_{SD},\quad &&\forall v^{N}\in V^{N}.
\end{align*}
\end{lemma}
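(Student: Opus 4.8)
The plan is to prove the three claims in Lemma \ref{lem:properties of P} essentially independently, relying in each case on the fact that all operations are \emph{local} to a macrotriangle $M$ and that, on a fixed $M$, the operator $\mathcal{P}_M$ is the quadratic Lagrange interpolation on the six nodes (three vertices, three edge midpoints) of $M$.

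\textbf{Step 1: the identity $\mathcal{P}(v^I)=P(v)$.}
First I would fix a macrotriangle $M$ and observe that $\mathcal{P}_M$ and the restriction of the linear nodal interpolant $(\cdot)^I$ are both determined by point values at nodes of $\mathcal{T}_N$. By construction $v^I$ agrees with $v$ at the four vertices of the sub-triangles of $M$, i.e.\ at the three vertices and three edge midpoints of $M$ together with the centroid-type point; in particular $v^I=v$ at exactly the six interpolation nodes used to define $\mathcal{P}_M$. Since $\mathcal{P}_M$ depends only on those six nodal values, $\mathcal{P}_M(v^I|_M)=\mathcal{P}_M(v|_M)$, which is the claim on $M$; gluing over all $M$ gives $\mathcal{P}(v^I)=P(v)$ globally. (One should note $v^I\in V^N\subset C(\bar\Omega)$ so that $\mathcal{P}$ applies to it.)

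\textbf{Step 2: the $\varepsilon$-norm stability bound.}
Here I would work macrotriangle by macrotriangle and split $\Vert\mathcal{P}v^N\Vert_\varepsilon^2=\varepsilon|\mathcal{P}v^N|_1^2+\mu_0\Vert\mathcal{P}v^N\Vert^2$ into contributions from each $M$. On a single $M$, $v^N|_M$ lies in a finite-dimensional space (continuous piecewise linear on four triangles) and $\mathcal{P}_M v^N$ lies in $P_2(M)$; both depend linearly on the finitely many nodal values of $v^N$ on $M$. A standard scaling/equivalence-of-norms argument on a reference macrotriangle, transplanted to $M$ via an affine map, gives $|\mathcal{P}_M v^N|_{H^1(M)}\le C|v^N|_{H^1(M)}$ and $\Vert\mathcal{P}_M v^N\Vert_{L^2(M)}\le C\Vert v^N\Vert_{L^2(M)}$ with $C$ independent of the shape of $M$ (the Shishkin macrotriangles fall into finitely many affine-equivalence classes up to anisotropic scaling, and on each sub-triangle $v^N$ is linear so the $L^2$ and $H^1$ seminorms control the nodal values and vice versa; the anisotropy in $x$ and $y$ scales out compatibly because $\mathcal{P}_M$ is built from affine-invariant Lagrange data). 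Here one must be slightly careful: because the mesh is anisotropic, I would verify the scaling on a tensor-product-shaped reference configuration with independent $h_x,h_y$, checking that the $\varepsilon|\cdot|_1^2$ and $\Vert\cdot\Vert^2$ pieces each transform in a way that preserves the inequality — this is routine but is where the anisotropy has to be handled honestly. Summing over $M$ yields $\Vert\mathcal{P}v^N\Vert_\varepsilon\le C\Vert v^N\Vert_\varepsilon$.

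\textbf{Step 3: the SD-norm stability bound.}
This follows the same template, with the extra streamline term $\sum_{K}\delta_K\Vert\b{b}\cdot\nabla v^N\Vert_K^2$. Since $\delta_K$ is supported in $\Omega_s$ and each macrotriangle lies entirely in one of the four subdomains, I would only need to bound, for $M\subset\Omega_s$, the quantity $\delta_M\Vert\b{b}\cdot\nabla\mathcal{P}_M v^N\Vert_M^2$ by $C\sum_{K\subset M}\delta_K\Vert\b{b}\cdot\nabla v^N\Vert_K^2$; since $\delta_K\equiv C^*N^{-1}$ is constant on $\Omega_s$ this reduces to $\Vert\b{b}\cdot\nabla\mathcal{P}_M v^N\Vert_M\le C\Vert\nabla v^N\Vert_M$ (using $\b{b}\in L^\infty$), which is exactly the $H^1$-seminorm scaling estimate from Step 2. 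Adding this to the already-established $\varepsilon$-norm bound gives $\Vert\mathcal{P}v^N\Vert_{SD}\le C\Vert v^N\Vert_{SD}$.

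\textbf{Main obstacle.}
The substantive point — and the one I would spend the most care on — is the scaling argument in Step 2 for anisotropic macrotriangles: one must confirm that the equivalence constants in $\Vert\mathcal{P}_M v^N\Vert_{H^1(M)}\le C\Vert v^N\Vert_{H^1(M)}$ (and the $L^2$ analogue) are uniform over all macrotriangles appearing in the Shishkin family, despite aspect ratios that degenerate like $O(\varepsilon\ln N)$ versus $O(1)$. The standard resolution is that there are only finitely many \emph{types} of macrotriangle up to a diagonal (non-uniform) affine change of variables $\hat x=x/h_x$, $\hat y=y/h_y$, under which $\mathcal{P}_M$ pulls back to the fixed reference Lagrange interpolant and under which the gradient seminorm, though not invariant, transforms by factors that cancel on both sides of the inequality. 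Everything else (Steps 1 and 3, and the summation over $M$) is bookkeeping. For the detailed execution of this scaling argument the reader may consult \cite[Lemma 5.2]{Styn1Tobi2:2003-SDFEM}, where the rectangular analogue is carried out; the triangular case is identical in structure.
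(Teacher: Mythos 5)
Your Steps 1 and 2 follow essentially the same route the paper intends (its proof is a one-line pointer to \cite[Lemma 5.5]{Styn1Tobi2:2003-SDFEM}): locality of $\mathcal{P}_M$, agreement of $v^I$ and $v$ at the six $P_2$-Lagrange nodes of each macrotriangle (note there is no extra ``centroid-type point''; the vertices of the four sub-triangles are exactly the three vertices and the three edge midpoints), and a finite-dimensional scaling argument for the $L^2$ and $H^1$ bounds. For the anisotropic scaling to ``cancel on both sides'', as you put it, one needs the componentwise statement that $\Vert\partial_{\hat x}\mathcal{P}_M\hat v\Vert$ is controlled by $\Vert\partial_{\hat x}\hat v\Vert$ alone; this is true because, by uniqueness of quadratic interpolation, $\mathcal{P}_M$ maps functions of $\hat y$ only to functions of $\hat y$ only, so the directional seminorm vanishes on the right kernel. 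You flag this point but do not prove it; it is the heart of Step 2 and should be spelled out, since a bound by the full reference gradient does not survive aspect ratios of order $(\varepsilon\ln N)^{-1}$.

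The genuine gap is in Step 3. You correctly state the target $\delta_M\Vert\b{b}\cdot\nabla\mathcal{P}_M v^N\Vert_M^2\le C\sum_{K\subset M}\delta_K\Vert\b{b}\cdot\nabla v^N\Vert_K^2$, but then assert it ``reduces to'' $\Vert\b{b}\cdot\nabla\mathcal{P}_M v^N\Vert_M\le C\Vert\nabla v^N\Vert_M$. That reduction fails: it would need $\Vert\nabla v^N\Vert_M\le C\Vert\b{b}\cdot\nabla v^N\Vert_M$, which is false for functions with strong crosswind variation, and the surplus term $N^{-1}\Vert\nabla v^N\Vert^2_{\Omega_s}$ cannot be absorbed by the rest of $\Vert v^N\Vert^2_{SD}$ because $\varepsilon$ may be far smaller than $N^{-1}$. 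Concretely, take $\b{b}=(1,1)$, $\varepsilon\le N^{-3/2}$, and $v^N$ with nodal values $\sin\bigl(\pi(i-j)/m\bigr)$ with $m\sim N^{1/4}$: on this triangulation $\b{b}\cdot\nabla v^N$ is a second difference, so $\Vert v^N\Vert^2_{SD}\sim\Vert v^N\Vert^2$, while $N^{-1}\Vert\nabla v^N\Vert^2_{\Omega_s}\sim N^{1/2}\Vert v^N\Vert^2$, so your chain of inequalities cannot close. What is actually needed is stability of the streamline derivative itself: freeze $\b{b}$ at a point $x_M$ of $M$, prove $\Vert\b{b}(x_M)\cdot\nabla\mathcal{P}_M v^N\Vert_M\le C\Vert\b{b}(x_M)\cdot\nabla v^N\Vert_M$ by the same kernel argument as in Step 2 (for a direction $d$ with $d_1,d_2>0$ no interior edge of the macrotriangle is parallel to $d$, so the kernel of $d\cdot\nabla$ consists only of linear functions, which $\mathcal{P}_M$ reproduces, and the constant is uniform since the mesh on $\Omega_s$ is isotropic and $\b{b}$ stays in a compact subset of the open quadrant), and then treat $\b{b}-\b{b}(x_M)$ as an $O(N^{-1})$ perturbation, using an inverse estimate on the quasi-uniform mesh of $\Omega_s$ and recalling $\delta_K=0$ outside $\Omega_s$. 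With that replacement Step 3 goes through and the whole argument matches, in structure, the rectangular case the paper cites.
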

\begin{proof}
The proof is standard and the reader is referred to \cite[Lemma 5.5]{Styn1Tobi2:2003-SDFEM}. We just need to consider the differences between standard basis functions on 
triangular meshes and rectangular ones.
\end{proof}

\begin{lemma}\label{lem:u-Pu in varepsilon norm}
Let Assumption \ref{assumption-regularity}  hold true for $0\le i+j\le 3$. Then
\begin{equation*}
\Vert u-\tilde{u}^I \Vert_{\varepsilon}\le C (\varepsilon N^{-3/2}+N^{-2}\ln^2 N).
\end{equation*}
\end{lemma}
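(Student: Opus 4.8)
The plan is to estimate $\Vert u - \tilde u^I\Vert_\varepsilon$ by splitting both the error and the domain into pieces that match the structure already exploited in Lemma~\ref{lem: linear interpolation} and Theorem~\ref{eq: uI-uN varepsilon}. Recall $\tilde u^I = \mathcal{P}(u^I) = \mathcal{P}(u) = \tilde u$, so we are bounding the quadratic macro-interpolation error $u - \mathcal{P}u$. First I would use the solution decomposition $u = S + E_1 + E_2 + E_{12}$ and the triangle inequality to reduce to estimating $\Vert S - \mathcal{P}S\Vert_\varepsilon$ and the layer contributions $\Vert E_k - \mathcal{P}E_k\Vert_\varepsilon$ separately, on the subdomains $\Omega_s$, $\Omega_x$, $\Omega_y$, $\Omega_{xy}$ (being careful that each macrotriangle $M \in \tilde{\mathcal{T}}_{N/2}$ lies in exactly one subdomain, as noted in the construction).

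For the regular part $S$, on any macrotriangle $M$ one has the anisotropic estimate for quadratic interpolation
$$
\Vert S - \mathcal{P}S\Vert_{L^2(M)} \le C\sum_{l+m=3} H_{x}^{l}H_{y}^{m}\Vert \partial_x^l\partial_y^m S\Vert_{L^2(M)},
\qquad
|S - \mathcal{P}S|_{1,M} \le C\sum_{l+m=2} H_{x}^{l}H_{y}^{m}\Vert \partial_x^{l+1}\partial_y^m S + \partial_x^l\partial_y^{m+1}S\Vert_{L^2(M)},
$$
where $H_x, H_y$ are the macro mesh sizes (at most $2$ times the fine sizes). Since $|\partial^{i+j}S| \le C$ for $i+j\le 3$, the fine mesh sizes are $\le CN^{-1}$ in the smooth direction and $\le C\varepsilon N^{-1}\ln N$ in a layer direction; summing $\varepsilon|S-\mathcal{P}S|_1^2 + \mu_0\Vert S-\mathcal{P}S\Vert^2$ over all macrotriangles gives a contribution of order $\varepsilon^{1/2}N^{-2}+N^{-2}$ from the $H^1$-seminorm part (the $\varepsilon$ cancels the $\varepsilon^{-1}$ only in the coarse subdomain; in fine subdomains smallness of area helps) and $N^{-3}$ from the $L^2$ part, all dominated by $N^{-2}\ln^2 N$. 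This step is routine given Assumption~\ref{assumption-regularity}.

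For the layer components the key point is the same trade-off used throughout \S3: in the layer region the bound $|\partial_x^i E_1|\le C\varepsilon^{-i}e^{-\beta_1(1-x)/\varepsilon}$ carries factors $\varepsilon^{-i}$, but these are multiplied by $H_x^l$ with $H_x \le C\varepsilon N^{-1}\ln N$, so $H_x^l\varepsilon^{-l'}$ stays bounded when $l\ge l'$; meanwhile the exponential is integrable and contributes $O(\varepsilon^{1/2})$ factors from $\Vert e^{-\beta_1(1-x)/\varepsilon}\Vert_{L^2}$. One must check the ``mixed'' terms where a macrotriangle straddles behaviour — e.g. for $E_1$ in $\Omega_y$ or $\Omega_{xy}$ where one direction is fine and one is coarse — but since each $M$ lies in a single subdomain the mesh is uniform on $M$ and the anisotropic quadratic interpolation estimate applies cleanly. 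Collecting: the $\varepsilon$-weighted seminorm of the layer error yields the $\varepsilon N^{-3/2}$-type term (the $N^{-3/2}$ rather than $N^{-2}$ arising exactly as in Remark~\ref{remark:triangle and rectangle}, from the triangular-mesh diffusion estimate and the $E_1$-in-$\Omega_x$ region), and the $\mu_0$-weighted $L^2$ part yields terms of order $N^{-\rho}$ or $N^{-2}\ln^2 N$, which are subsumed.

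The main obstacle I expect is not any single estimate but getting the exponent $3/2$ correct in the $\varepsilon N^{-3/2}$ term: one must resist the temptation to bound $\Vert E_1 - \mathcal{P}E_1\Vert_\varepsilon$ crudely in $\Omega_x$ (which would give only $\varepsilon^{1/2}N^{-1}\ln^{\cdots}N$) and instead use the finer treatment — the analogue of Lemma~\ref{lem:superconvergence-diffusion-term} and the $\mr{T}_1,\mr{T}_2,\mr{T}_3$ splitting from Lemma~\ref{lem: uI-uN-I} adapted to the macro-mesh and quadratic interpolation — so that the anisotropic cancellation along the layer is exploited. Everything else (the regular part, the $E_2$ and $E_{12}$ parts, the coarse-region contributions) reduces to standard anisotropic interpolation bounds combined with the exponential decay in Assumption~\ref{assumption-regularity} and the explicit mesh-size bounds in \S2.
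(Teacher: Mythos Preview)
Your overall strategy --- decompose $u=S+E_1+E_2+E_{12}$, estimate each piece on each subdomain via anisotropic interpolation bounds for the quadratic operator $\mathcal P$, and use smallness of the layer functions outside their own layer regions --- is exactly what the paper intends (it simply cites Stynes--Tobiska, Lemma~5.5). The first two thirds of your outline would carry through.

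However, the ``main obstacle'' you identify is a phantom, and your proposed remedy is inapplicable. Two points:

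\emph{(i) The direct anisotropic estimate already suffices in $\Omega_x$.} Because $\mathcal P$ is \emph{quadratic}, the $H^1$ interpolation error is of order $h^2$, not $h$. On a macrotriangle $M\subset\Omega_x$ the anisotropic bound gives
\[
\Vert (E_1-\mathcal P E_1)_x\Vert_{L^2(M)}
\le C\sum_{l+m=2} H_x^l H_y^m\,\Vert \partial_x^{l+1}\partial_y^m E_1\Vert_{L^2(M)},
\]
and with $H_x\sim \varepsilon N^{-1}\ln N$, $H_y\sim N^{-1}$ the worst term is $H_x^2\Vert\partial_x^3 E_1\Vert_{L^2(\Omega_x)}\le C(\varepsilon N^{-1}\ln N)^2\cdot \varepsilon^{-5/2}=C\varepsilon^{-1/2}N^{-2}\ln^2 N$. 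Multiplying by $\varepsilon^{1/2}$ yields $CN^{-2}\ln^2 N$ directly --- not $\varepsilon^{1/2}N^{-1}$ as you feared. No extra cancellation is required.

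\emph{(ii) Lemma~\ref{lem:superconvergence-diffusion-term} cannot be used here.} That lemma and the $\mathrm T_1/\mathrm T_2/\mathrm T_3$ splitting bound a bilinear form $\int (w-w^I)_x\, v^N_x$ by exploiting that $v^N_x$ is constant on $\mathcal Q_{i,j}$ for piecewise \emph{linear} $v^N$. In the present lemma you are estimating the norm $\varepsilon^{1/2}\Vert\nabla(u-\mathcal P u)\Vert$ itself; there is no piecewise-linear test function whose derivative is elementwise constant, so that cancellation mechanism has no foothold.

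Finally, note that under Assumption~2.2 ($\varepsilon\le N^{-1}$) one has $\varepsilon N^{-3/2}\le N^{-5/2}\le N^{-2}\ln^2 N$, so the term $\varepsilon N^{-3/2}$ is not a bottleneck at all; it arises from routine bookkeeping, not from any delicate layer estimate. Drop the plan to adapt Lemma~\ref{lem:superconvergence-diffusion-term}; what remains of your outline is the correct proof.
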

\begin{proof}
The proof is similar to the one of \cite[Lemma 5.5]{Styn1Tobi2:2003-SDFEM}.
\end{proof}

\begin{theorem}
Let Assumption \ref{assumption-regularity}  hold true for $0\le i+j\le 3$. Then the
the numerical solution $\tilde{u}^{N}=Pu^{N}$, which is generated by postprocessing the SDFEM's solution $u^{N}$,
satisfies
$$ 
\Vert u-\tilde{u}^{N}\Vert_{\varepsilon} \le C(\varepsilon^{1/2} N^{-1}+N^{-3/2})\ln^{3/2}N.$$
\end{theorem}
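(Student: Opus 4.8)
The plan is to derive the bound on $\Vert u - \tilde u^N \Vert_\varepsilon$ by a triangle-inequality split into an interpolation-type error and a supercloseness-type error, exactly mirroring the strategy used for rectangular meshes in \cite[Theorem 5.6]{Styn1Tobi2:2003-SDFEM}. Write
\begin{equation*}
\Vert u - \tilde u^N \Vert_\varepsilon
\le \Vert u - \tilde u^I \Vert_\varepsilon
+ \Vert \tilde u^I - \tilde u^N \Vert_\varepsilon,
\end{equation*}
where $\tilde u^I = \mathcal{P}(u^I)$. The first term is controlled by Lemma \ref{lem:u-Pu in varepsilon norm}, which gives $\Vert u - \tilde u^I \Vert_\varepsilon \le C(\varepsilon N^{-3/2} + N^{-2}\ln^2 N)$, and this is clearly dominated by $C(\varepsilon^{1/2}N^{-1} + N^{-3/2})\ln^{3/2}N$.

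For the second term, the key observation is the commuting property from Lemma \ref{lem:properties of P}, namely $\mathcal{P}(v^I) = \mathcal{P}(v)$ for $v \in C(\bar\Omega)$; applied with $v = u$ this is not directly what is needed, but since $u^I$ and $u^N$ both lie in $V^N$ and $\mathcal{P}$ is linear, we have
\begin{equation*}
\tilde u^I - \tilde u^N = \mathcal{P}(u^I) - \mathcal{P}(u^N) = \mathcal{P}(u^I - u^N).
\end{equation*}
Now apply the stability estimate $\Vert \mathcal{P} v^N \Vert_\varepsilon \le C \Vert v^N \Vert_\varepsilon$ from Lemma \ref{lem:properties of P} with $v^N = u^I - u^N \in V^N$, so that
\begin{equation*}
\Vert \tilde u^I - \tilde u^N \Vert_\varepsilon = \Vert \mathcal{P}(u^I - u^N) \Vert_\varepsilon \le C \Vert u^I - u^N \Vert_\varepsilon.
\end{equation*}
By Theorem \ref{eq: uI-uN varepsilon}, $\Vert u^I - u^N \Vert_\varepsilon \le C(\varepsilon^{1/2}N^{-1} + N^{-3/2})\ln^{3/2}N$, which is precisely the target order. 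Combining the two estimates and absorbing the weaker term finishes the proof.

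Since every ingredient is already available as a stated lemma or theorem, there is essentially no obstacle left at this stage: the work has all been pushed into the supercloseness bound (Theorem \ref{eq: uI-uN varepsilon}), into the $\mathcal{P}$-stability in the energy norm (Lemma \ref{lem:properties of P}), and into the postprocessed interpolation error (Lemma \ref{lem:u-Pu in varepsilon norm}). If anything, the one point deserving a word of care is making sure the commuting/linearity argument is applied to elements of $V^N$ rather than to $u$ itself — one does \emph{not} write $\tilde u^N$ as $\mathcal{P}(u)$, but rather exploits $\tilde u^I - \tilde u^N = \mathcal{P}(u^I - u^N)$ together with linearity and $\varepsilon$-norm stability of $\mathcal{P}$ on the discrete space. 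Everything else is routine bookkeeping of the dominant terms.
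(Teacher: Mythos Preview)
Your proof is correct and follows essentially the same route as the paper: a triangle-inequality split $\Vert u-\tilde u^N\Vert_\varepsilon \le \Vert u-\tilde u^I\Vert_\varepsilon+\Vert \mathcal P(u^I-u^N)\Vert_\varepsilon$, then Lemma~\ref{lem:u-Pu in varepsilon norm} for the first term, $\varepsilon$-norm stability of $\mathcal P$ plus Theorem~\ref{eq: uI-uN varepsilon} for the second. The only cosmetic difference is that the paper writes the first term as $\Vert u-\mathcal Pu\Vert_\varepsilon$, invoking the commuting identity $\mathcal P(u^I)=\mathcal P u$ explicitly, whereas you keep the notation $\tilde u^I$ and rely on linearity alone for the second term; the two presentations are equivalent.
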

\begin{proof}
The triangle inequality, Lemmas  \ref{eq: uI-uN varepsilon}, \ref{lem:properties of P} and \ref{lem:u-Pu in varepsilon norm} yield
\begin{align*}
\Vert u-Pu^{N}\Vert_{\varepsilon}
\le&
\Vert u-Pu\Vert_{\varepsilon}+\Vert P(u^I-u^{N} )\Vert_{\varepsilon}
\le 
\Vert u-Pu\Vert_{\varepsilon}+C\Vert u^I-u^{N}\Vert_{\varepsilon}
\\
\le&
C(\varepsilon N^{-3/2}+N^{-2}\ln^2 N)+C(\varepsilon^{1/2} N^{-1}+N^{-3/2})\ln^{3/2}N\\
\le&
C(\varepsilon^{1/2} N^{-1}+N^{-3/2})\ln^{3/2}N.
\end{align*}
\end{proof}

%
%

%
%
\section{Numerical results}
\noindent
In this section we give numerical results that appear to support our theoretical results. Errors and convergence rates of $u^I-u^N$, $u-u^N$ and $u-Pu^N$ are presented. For the computations we have chosen $C^{\ast}=1.0$ in \eqref{eq: delta-K}. All calculations were carried out using Intel visual Fortran 11. The discrete problems
were solved by the nonsymmetric iterative solver GMRES(cf. e.g.,\cite{Ben1Gol2Lie3:2005-Numerical,Saad1Schu2:1986-GMRES}).
\par
We will  illustrate our results  by computing errors and convergence orders for
the following boundary value problems
\begin{align*}
-\varepsilon\Delta u+2u_{x}+u_{y}+u&=f(x,y)\quad&&\text{in $\Omega=(0,1)^{2}$},\\
u&=0&& \text{on $\partial\Omega$}
\end{align*}
where the right-hand side  $f$ is chosen such that
\begin{equation*}
u(x,y)=2\sin x\left(1-e^{-\frac{2(1-x)}{\varepsilon} } \right)
y^{2} \left( 1-e^{-\frac{(1-y)}{\varepsilon} } \right)
\end{equation*}
is the exact solution.

%

\begin{table}
\caption{$\varepsilon=10^{-4}, 10^{-6},10^{-8},10^{-10}$}
\footnotesize
\begin{tabular*}{\textwidth}{@{\extracolsep{\fill}}ccccc}
\hline
$N$   & $ \Vert u^I-u^{N}\Vert_{\varepsilon}$   & Rate   & $ \Vert u^I-u^{N}\Vert_{SD}$    & Rate\\
\hline
8   &$1.0496\times10^{-1}$  & $0.74$    &$1.2058\times10^{-1}$      &    $0.93$\\
16  &$6.2921\times10^{-2}$   &$1.19$  &$6.3435\times10^{-2}$      &    $1.13$\\
32  &$2.8978\times10^{-2}$   &$1.30$   &$2.9027\times10^{-2}$     &    $1.30$\\
64  &$1.1762\times10^{-2}$   &$1.38$&$1.1769\times10^{-2}$       &    $1.38$\\
128  &$4.5131\times10^{-3}$   &$1.41$&$4.5143\times10^{-3}$      &    $1.41$\\
256  &$1.6965\times10^{-3}$   &$1.42$&$1.6967\times10^{-3}$            &$1.42$\\
512  &$6.3617\times10^{-4}$   &$1.41$&$6.3620\times10^{-4}$            &$1.41$\\
1024  &$2.3980\times10^{-4}$   &$---$&$2.3981\times10^{-4}$            &$---$\\
\hline
\end{tabular*}
\label{table: alex-1}
\end{table}
In Table \ref{table: alex-1}, the errors and convergence rates for $\Vert u^I-u^{N}\Vert_{\varepsilon}$ and $\Vert u^I-u^{N} \Vert_{SD}$ are displayed.  We observe $\varepsilon$-independence of the errors and convergence rates. These numerical results support our theoretical ones: almost $3/2$ order
 convergence for $\Vert u^I-u^{N}\Vert_{\varepsilon}$ and $\Vert u^I-u^{N} \Vert_{SD}$.

\begin{table}
\caption{$\varepsilon=10^{-4}, 10^{-6},10^{-8},10^{-10}$}
\footnotesize
\begin{tabular*}{\textwidth}{@{\extracolsep{\fill}}ccccc}
\hline
$N$   & $ \Vert u-u^{N}\Vert_{\varepsilon}$   & Rate   & $ \Vert u-\tilde{u}^{N}\Vert_{\varepsilon}$    & Rate\\
\hline
8   &$3.05\times10^{-1}$  & $0.53$    &$1.55\times10^{-1}$      &    $0.79$\\
16  &$2.11\times10^{-1}$   &$0.63$  &$8.95\times10^{-2}$      &    $1.09$\\
32  &$1.36\times10^{-1}$   &$0.70$   &$4.19\times10^{-2}$     &    $1.33$\\
64  &$8.38\times10^{-2}$   &$0.75$&$1.67\times10^{-2}$       &    $1.45$\\
128  &$4.99\times10^{-2}$   &$0.78$&$6.12\times10^{-3}$      &    $1.51$\\
256  &$2.90\times10^{-2}$   &$0.81$&$2.15\times10^{-3}$            &$1.53$\\
512  &$1.65\times10^{-2}$   &$0.83$&$7.46\times10^{-4}$            &$1.52$\\
1024  &$9.28\times10^{-3}$   &$---$&$2.60\times10^{-4}$            &$---$\\
\hline
\end{tabular*}
\label{table: alex-2}
\end{table}

Table \ref{table: alex-2} gives the errors and convergence rates for $\Vert u-u^{N}\Vert_{\varepsilon}$ and $\Vert u-\tilde{u}^{N}\Vert_{\varepsilon}$.  We can see that
the convergence  order of $\Vert u-u^{N}\Vert_{\varepsilon}$ is almost $1$ and one
of $\Vert u-\tilde{u}^{N}\Vert_{\varepsilon}$ is almost $3/2$, as supports our theoretical results about the postprocessing solution $\tilde{u}^N$.

\end{document}